\newcommand{\Tr}{\ensuremath{^{\mr{T}}}}
\newcommand{\mr}[1]{\ensuremath{\mathrm{#1}}}
\newcommand{\fnc}[1]{\ensuremath{\mathcal{#1}}}
\newcommand{\mat}[1]{\ensuremath{\mathsf{#1}}}
\newcommand{\M}[0]{\mat{H}}
\newcommand{\Jac}[0]{\mat{J}}
\newcommand{\JacL}[0]{\Jac_{L}}
\newcommand{\JacR}[0]{\Jac_{R}}
\newcommand{\ML}[0]{\mat{H}_{L}}
\newcommand{\MR}[0]{\mat{H}_{R}}
\newcommand{\Dx}[0]{\mat{D}_{\xi}}
\newcommand{\Dy}[0]{\mat{D}_{\eta}}
\newcommand{\Sx}[0]{\mat{S}_{\xi}}
\newcommand{\Sy}[0]{\mat{S}_{\eta}}
\newcommand{\Qx}[0]{\mat{Q}_{\xi}}
\newcommand{\Qy}[0]{\mat{Q}_{\eta}}
\newcommand{\DxR}[0]{\mat{D}_{\xi R}}
\newcommand{\DyR}[0]{\mat{D}_{\eta R}}
\newcommand{\DxL}[0]{\mat{D}_{\xi L}}
\newcommand{\DyL}[0]{\mat{D}_{\eta L}}
\newcommand{\QxL}[0]{\mat{Q}_{\xi L}}
\newcommand{\QyL}[0]{\mat{Q}_{\eta L}}
\newcommand{\QxR}[0]{\mat{Q}_{\xi R}}
\newcommand{\QyR}[0]{\mat{Q}_{\eta R}}
\newcommand{\ExL}[0]{\mat{E}_{\xi L}}
\newcommand{\ExLi}[1]{\mat{E}_{\xi L,#1}}
\newcommand{\ExRi}[1]{\mat{E}_{\xi R,#1}}
\newcommand{\EyL}[0]{\mat{E}_{\eta L}}
\newcommand{\ExR}[0]{\mat{E}_{\xi R}}
\newcommand{\EyR}[0]{\mat{E}_{\eta R}}
\newcommand{\oneL}[0]{\bm{1}_{L}}
\newcommand{\oneR}[0]{\bm{1}_{R}}
\newcommand{\oneG}[0]{\bm{1}_{\Gamhat}}
\newcommand{\Ex}[0]{\mat{E}_{\xi}}
\newcommand{\Ey}[0]{\mat{E}_{\eta}}
\newcommand{\B}[0]{\mat{B}}
\newcommand{\R}[0]{\mat{R}}
\newcommand{\BL}[0]{\B_{L}}
\newcommand{\RL}[0]{\R_{L}}
\newcommand{\BR}[0]{\B_{R}}
\newcommand{\RR}[0]{\R_{R}}
\newcommand{\Blam}[0]{\B_{\lambda}}
\newcommand{\V}[0]{\mat{V}}
\newcommand{\Vx}[0]{\mat{V}_{\xi}}
\newcommand{\nmin}[1]{N^{*}_{#1}}
\newcommand{\uL}[0]{\bm{u}_{L}}
\newcommand{\uR}[0]{\bm{u}_{R}}
\newcommand{\ignore}[1]{}
\newcommand{\etal}[0]{{\em et~al.\@}\xspace}
\newcommand{\eg}[0]{{e.g.\@}\xspace}
\newcommand{\ie}[0]{{i.e.\@}\xspace}
\newcommand{\fpk}{\fnc{P}_{k}}
\newcommand{\fpm}{\fnc{P}_{m}}
\newcommand{\pk}{\bm{p}_{k}}
\newcommand{\pM}{\bm{p}_{m}}
\newcommand{\pkL}{\bm{p}_{k,L}}
\newcommand{\dxfpm}{\frac{\partial\fnc{P}_{m}}{\partial \xi}}
\newcommand{\dxpk}{\bm{p}_{k}'}
\newcommand{\veclambda}[0]{\ensuremath{\boldsymbol{\lambda}}}
\newcommand{\vecnabla}[0]{\ensuremath{\nabla}}
\newcommand{\vecnrm}[0]{\ensuremath{\bm{n}}}
\newcommand{\MLL}[0]{\mat{M}^{\lambda_{\xi}}_{LL}}
\newcommand{\MLR}[0]{\mat{M}^{\lambda_{\xi}}_{LR}}
\newcommand{\MRR}[0]{\mat{M}^{\lambda_{\xi}}_{RR}}
\newcommand{\MRL}[0]{\mat{M}^{\lambda_{\xi}}_{RL}}
\newcommand{\SBPGamma}[0]{{SBP-$\Gamma$\@}\xspace}
\newcommand{\SBPOmega}[0]{{SBP-$\Omega$\@}\xspace}
\DeclareMathOperator{\mydiag}{diag}
\newtheorem{thrm}{Theorem}
\newtheorem{assume}{Assumption}
\newtheorem{cond}{Condition}
\newenvironment{customcond}[1]
  {\innercustomcond}
  {\endinnercustomcond}
\newcommand{\lamxx}[0]{\lambda_{x}}
\newcommand{\lamyy}[0]{\lambda_{y}}
\newcommand{\lamx}[0]{\lambda_{\xi}}
\newcommand{\lamy}[0]{\lambda_{\eta}}
\newcommand{\Lamx}[0]{\Lambda_{\xi}}
\newcommand{\Lamy}[0]{\Lambda_{\eta}}
\newcommand{\nx}[0]{n_{\xi}}
\newcommand{\ny}[0]{n_{\eta}}
\newcommand{\nxL}[0]{n_{\xi L}}
\newcommand{\nyL}[0]{n_{\eta L}}
\newcommand{\nxR}[0]{n_{\xi R}}
\newcommand{\nyR}[0]{n_{\eta R}}
\newcommand{\nxx}[0]{n_{x}}
\newcommand{\nyy}[0]{n_{y}}
\newcommand{\UL}[0]{\mat{U}_{L}}
\newcommand{\UR}[0]{\mat{U}_{R}}
\newcommand{\diag}[0]{\textrm{diag}}
\newcommand{\E}[0]{\mat{E}}
\newcommand{\LamxL}[0]{\Lambda_{\xi L}}
\newcommand{\LamyL}[0]{\Lambda_{\eta L}}
\newcommand{\LamxR}[0]{\Lambda_{\xi R}}
\newcommand{\LamyR}[0]{\Lambda_{\eta R}}
\newcommand{\Mg}[0]{\mat{H}_{g}}
\newcommand{\lamxi}[0]{\lambda_{\xi}}
\newcommand{\lameta}[0]{\lambda_{\eta}}
\newcommand{\Omhat}[0]{\hat{\Omega}}
\newcommand{\Gamhat}[0]{\hat{\Gamma}}
\newcommand{\Omt}[0]{\tilde{\Omega}}
\newcommand{\Gamt}[0]{\tilde{\Gamma}}
\newcommand{\tildeML}[0]{\tilde{\mat{H}}_{L}}
\newcommand{\tildeMR}[0]{\tilde{\mat{H}}_{R}}
\newcommand{\pkgammaL}[0]{\bm{p}_{k,\Gamhat_{L}}}
\newcommand{\LamL}[0]{\mat{\Lambda}_{L}}
\newcommand{\LamR}[0]{\mat{\Lambda}_{R}}
\newcommand{\vL}[0]{\bm{v}_{L}}
\newcommand{\vR}[0]{\bm{v}_{R}}
\newcommand{\Jace}[0]{\mat{J}_{e}}
\newcommand{\ue}{\bm{u}_{e}}
\newcommand{\ui}[0]{\bm{u}_{i}}
\newcommand{\Dxie}[0]{\mat{D}_{\xi e}}
\newcommand{\Detae}[0]{\mat{D}_{\eta e}}
\newcommand{\Qxie}[0]{\mat{Q}_{\xi e}}
\newcommand{\Qetae}[0]{\mat{Q}_{\eta e}}
\newcommand{\Me}[0]{\mat{H}_{e}}
\newcommand{\Be}[0]{\mat{B}_{e}}
\newcommand{\Rei}[0]{\mat{R}_{ei}}
\newcommand{\Lame}[0]{\mat{\Lambda}_{e}}
\newcommand{\Lamxie}[0]{\mat{\Lambda}_{\xi e}}
\newcommand{\Lametae}[0]{\mat{\Lambda}_{\eta e}}
\newcommand{\Meei}[0]{\mat{M}_{ee,i}^{\lambda_{\xi}}}
\newcommand{\Mei}[0]{\mat{M}_{e,i}^{\lambda_{\xi}}}
\newcommand{\onee}[0]{\bm{1}_{e}}
\newcommand{\pki}[1]{\bm{p}_{#1}}
\newcommand{\Vp}[0]{\mat{V}}
\newcommand{\Vpxi}[0]{\mat{V}_{\xi}}
\newcommand{\Extilde}[0]{\tilde{\mat{E}}_{\xi}}
\newcommand{\W}[0]{\mat{W}}
\newcommand{\Wx}[0]{\mat{W}_{\xi}}
\newcommand{\Vtilde}[0]{\tilde{\mat{V}}}
\newcommand{\Vxtilde}[0]{\tilde{\mat{V}}_{\xi}}
\newcommand{\Sxtilde}[0]{\tilde{\mat{S}}_{\xi}}
\newcommand{\Dxtilde}[0]{\tilde{\mat{D}}_{\xi}}
 \journalname{Journal of Scientific Computing}
\begin{document}
\title{Simultaneous Approximation Terms for Multi-Dimensional Summation-by-Parts Operators\footnote{Some of the material presented in this article has also appeared in: Hicken J. E., Del Rey Fern\'andez D. C and Zingg D. W. \it{Simultaneous Approximation Terms for Multi-dimensional Summation-by-parts Operators}, AIAA Aviation Conference (2016).}}

\author{David C.\ Del Rey Fern\'andez         \and
        Jason E.\ Hicken                      \and
        David W.\ Zingg
}
\institute{David C.\ Del Rey Fern\'andez, Postdoctoral Fellow\at
              University of Toronto Institute for Aerospace Studies \\
              \email{dcdelrey@gmail.com}           %  \\
%             \emph{Present address:} of F. Author  %  if needed
           \and
           Jason E.\ Hicken, Assistant Professor\at
           Rensselaer Polytechnic Institute \\
           \email{hickej2@rpi.edu}
           \and
           David W.\ Zingg, Professor, J. Armand Bombardier Foundation Chair in Aerospace Flight, Institute for Aerospace Studies, 4925 Dufferin St., and Associate Fellow AIAA\at
           University of Toronto Institute for Aerospace Studies \\
           \email{dwz@oddjob.utias.utoronto.ca}
}

\date{}
% The correct dates will be entered by the editor

\maketitle
\begin{abstract}
This paper is concerned with the accurate, conservative, and stable imposition
of boundary conditions and inter-element coupling for multi-dimensional
summation-by-parts (SBP) finite-difference operators.  More precisely, the focus
is on diagonal-norm SBP operators that are not based on tensor products and are
applicable to unstructured grids composed of arbitrary elements.  We show how
penalty terms --- simultaneous approximation terms (SATs) --- can be adapted to
discretizations based on multi-dimensional SBP operators to enforce boundary and
interface conditions. A general SAT framework is presented that leads to
conservative and stable discretizations of the variable-coefficient advection
equation.  This framework includes the case where there are no nodes on the
boundary of the SBP element at which to apply penalties directly.  This is an
important generalization, because elements analogous to Legendre-Gauss
collocation, \ie without boundary nodes, typically have higher accuracy for the
same number of degrees of freedom.  Symmetric and upwind examples of the general
SAT framework are created using a decomposition of the symmetric part of an SBP
operator; these particular SATs enable the pointwise imposition of boundary
and inter-element conditions. We illustrate the proposed SATs using
triangular-element SBP operators with and without nodes that lie on the
boundary.  The accuracy, conservation, and stability properties of the resulting
SBP-SAT discretizations are verified using linear advection problems with
spatially varying divergence-free velocity fields.
\end{abstract}
%=========================================================================
%
%	Introduction
%
%=========================================================================
\section{Introduction}
\ignore{
We are interested in high-order discretizations that obey the summation by parts
(SBP) property.  Our interest is driven by two characteristics that SBP
discretizations provide.  First, the SBP property mimics integration by parts in
such a way that it greatly facilitates the construction of high-order schemes
that are conservative and provably stable.  Second, SBP operators are not
completely determined by an underlying polynomial basis, and this provides
flexibility to optimize the operators in various ways.

This paper focuses on the first characteristic: conservative and stable
high-order SBP discretizations.  In particular, the paper addresses the stable,
accurate, and conservative imposition of boundary conditions and inter-element
coupling in the context of SBP discretizations of the variable-coefficient
advection problem.  Ultimately, our plan is to apply these SBP discretizations
to split forms of nonlinear partial-differential equations (PDEs).  Split forms
can be used, for example, to prove nonlinear entropy stability of the Euler
equations
Refs.\ \cite{Harten1983,Hughes1986,Matteo2015,Carpenter2014,Fisher2013b},
and they have been shown to improve robustness \cite{Gassner2016}.

SBP methods have predominantly been developed in the context of high-order
finite difference methods~\cite{Kreiss1974,Strand1994} where the nodal
distribution in computational space is uniform; see the review
papers~\cite{Fernandez2014,Svard2014} and the references therein. While SBP
methods have been extended in a number of ways, for example
see~\cite{Fernandez2015,DCDRF2014,Gassner2013,Carpenter1996}, the majority of
these developments have been limited to one-dimensional operators that are
applied to multi-dimensional problems using tensor-product operators in
computational space. An interesting exception is the work by Nordstr\"om
\etal~\cite{Nordstrom2003}, which presents a vertex-centered
second-order-accurate finite-volume scheme with the SBP property on unstructured
grids.

The tensor-product approach, while adequate for many applications, has
limitations when applied to complex geometries and in the context of localized,
anisotropic mesh adaptation. This has motivated our interest in generalizing SBP
operators to more general multi-dimensional subdomains, \ie elements.  Thus,
building on the generalization in~\cite{DCDRF2014}, we presented an SBP
definition in~\cite{multiSBP} (see also~\cite{AIAAversion}) that is suitable for
arbitrary, bounded subdomains with piecewise smooth, orientable boundaries.

SBP derivative operators, including those in \cite{multiSBP}, do not inherently
enforce boundary conditions or inter-element coupling. The majority of SBP-based
discretizations rely on simultaneous approximation terms
(SATs)~\cite{Carpenter1994,Carpenter1999,Nordstrom1999,Nordstrom2001b} to impose
boundary conditions, as well as inter-element coupling when the solution space
is discontinuous. SATs are terms that impose boundary data and inter-element
coupling in a weak sense and lead to stable and conservative schemes without
impacting the asymptotic order of the discretization.

In~\cite{multiSBP} we derived SATs for multidimensional diagonal-norm SBP
operators and showed that the resulting discretizations are stable for the
constant-coefficient advection equation; however, the SATs in \cite{multiSBP}
are cumbersome to apply to variable coefficient problems, let alone nonlinear
PDEs.

For SBP operators that have a sufficient number of nodes on each face, we can
adopt SATs that are equivalent to the boundary-integrated numerical flux
functions used in nodal discontinuous Galerkin methods~\cite{hesthaven:2008};
however, these SATs are not provably stable for variable-coefficient advection
problems unless exact integration is used.  Moreover, we would like to consider
multi-dimensional SBP operators that do not have nodes on their boundaries; the
nodes and norm matrix of a diagonal-norm SBP operator define a strong cubature
rule~\cite{multiSBP}, and cubatures without boundary nodes tend to be more
accurate than rules with boundary nodes for the same number of
nodes~\cite{Cools1999}.

In light of the limitations of the SATs used in \cite{multiSBP}, the objectives
of the present work are to:
\begin{enumerate}
\item develop SATs that lead to provably stable and conservative schemes for
  variable-coefficient PDEs, and are extendable to nonlinear PDEs in split form,
  and;

\item generalize the SAT definition to accommodate multi-dimensional SBP
  operators that have few, if any, boundary nodes.
\end{enumerate}

The remainder of the paper is organized as follows.  After introducing some
notation, Section~\ref{sec:SBP} reviews the definition of multi-dimensional SBP
operators from~\cite{multiSBP}. Section~\ref{sec:decomp} demonstrates the
decomposition of the symmetric component of the SBP-derivative operator by
considering a set of auxiliary nodes on the boundary using
interpolation/extrapolation operators and face cubature rules. In
Sections~\ref{sec:var} and~\ref{sec:concrete} the general framework for
construction of stable and conservative SATs is presented and two examples of
SATs are discussed.  In order to illustrate SATs on a concrete example,
Section~\ref{sec:triSATs} presents two families of SBP operators for the
triangle and describes how SATs are constructed for these operators.  A
numerical verification using the linear advection equation with a spatially
varying divergence-free velocity field is given in Section~\ref{sec:results},
and conclusions are provided in Section~\ref{sec:conclude}.
}

We are interested in high-order discretizations that obey the summation by parts (SBP) property.  The SBP property mimics integration by parts, and it greatly facilitates the construction of high-order schemes that are conservative and provably stable (linearly and nonlinearly) \ignore{We are interested in schemes with the summation by parts (SBP) property, which are high-order methods that are mimetic of integration by parts, as they can be used to construct methods that are conservative and provably linearly and nonlinearly stable} \cite{Matteo2015,Carpenter2014,Fisher2013b}.  We are, in addition, interested in the flexibility provided by SBP operators that do not have a standard polynomial basis representation.  In principle, this flexibility can be used to optimize SBP operators in various ways.  For example, their efficiency can be improved by reducing their spectral radius or decreasing the number of floating point operations per node.

SBP methods have predominantly been developed in the context of high-order finite difference methods~\cite{Kreiss1974,Strand1994} where the nodal distribution in computational space is uniform; see the review papers~\cite{Fernandez2014,Svard2014} and the references therein. While SBP methods have been extended in a number of ways, for example see~\cite{Fernandez2015,DCDRF2014,Gassner2013,Carpenter1996}, the majority of these developments have been limited to one-dimensional operators that are applied to multi-dimensional problems using tensor-product operators in computational space. An interesting exception is the work by Nordstr\"om \etal~\cite{Nordstrom2003}, which presents a vertex-centered second-order-accurate finite-volume scheme with the SBP property on unstructured grids.

The tensor-product approach, while adequate for many applications, has limitations when applied to complex geometries and in the context of localized, anisotropic mesh adaptation. This motivates our interest in generalizing SBP operators to more general multi-dimensional subdomains, \ie elements.

Building on the generalization in~\cite{DCDRF2014}, we presented an SBP
definition in~\cite{multiSBP} (see also~\cite{AIAAversion}) that is suitable for
arbitrary, bounded subdomains with piecewise smooth, orientable boundaries.  For
diagonal-norm\footnote{The norm matrix can be viewed as a mass matrix.}
multi-dimensional SBP operators that are exact for polynomials of total degree
$p$, it was shown that the norm and corresponding nodes define a strong cubature
rule that is exact for polynomials of degree $2p-1$.  This connection to
cubature rules greatly simplifies the construction of SBP operators, since many
suitable cubature rules have already been identified in the
literature~\cite{Cools1999}.  In this paper, we will only consider diagonal-norm
operators.

SBP derivative operators do not inherently enforce boundary conditions or
inter-element coupling. The majority of SBP-based discretizations rely on
simultaneous approximation terms
(SATs)~\cite{Carpenter1994,Carpenter1999,Nordstrom1999,Nordstrom2001b} to impose
boundary conditions, as well as inter-element coupling when the solution space
is discontinuous. SATs are terms that impose boundary data and inter-element
coupling in a weak sense and lead to stable and conservative schemes without
impacting the asymptotic order of the discretization.

\ignore{
{\color{blue}
In this paper, we emphasize construction of SATs for split-forms of PDEs. Such split forms can be used to prove
nonlinear stability, for example of the Burger's equation and Euler equations, and more generally are related to
the entropy stability theory in Refs.\ \cite{Harten1983,Hughes1986,Matteo2015,Carpenter2014,Fisher2013b}. Moreover, such split-forms can be advantageous in improving robustness \cite{Gassner2016}.
n the context of variable coefficient problems, skew-symmetric splittings have previously been used by Kopriva and Gassner \cite{} to prove stability of the discontinuous Galerkin spectral element method and in this paper, we leverage their work in proving the stability of our semi-discretization.
}
}

In~\cite{multiSBP} we derived SATs for multidimensional diagonal-norm SBP operators and showed that the resulting discretizations are stable for the linear constant-coefficient advection equation.  Indeed, for constant-coefficient advection these penalties are the strong-form equivalent of the boundary-integrated numerical flux functions used in~\cite{hesthaven:2008}. The SATs described in \cite{multiSBP} can theoretically accommodate variable-coefficient advection problems; however, they are not practical for this class of problem because new SBP operators would be needed whenever the variable coefficients change.

The multi-dimensional SBP operators in~\cite{multiSBP} were designed to have a unisolvent set of nodes on each face for the appropriate space of polynomials.  This constraint was imposed, in part, to simplify the construction of pointwise SATs, but it increases the total number of nodes required for the SBP cubature.  For example, the quadratic, cubic, and quartic SBP operators for the triangle require 7, 12, and 18 nodes, respectively, rather than the 6, 10, and 15 nodes necessary for a total-degree basis~\cite{multiSBP}.  A similar trend is observed for tetrahedral elements.  Given the quadratic complexity of matrix-vector multiplication, there is impetus to minimize the number of volume nodes.  In addition, it is well known that strong cubature rules without boundary nodes tend to be more accurate than rules with boundary nodes for the same number of nodes~\cite{Cools1999}. \ignore{In addition, it is known that tensor-product spectral-element discretizations that use Legendre-Gauss collocation (no nodes on the element boundaries) outperform discretizations based on Legendre-Gauss-Lobatto collocation (nodes on element boundaries)~\cite{???}.}

In light of the limitations of the SATs used in \cite{multiSBP}, the objectives
of the present work are to:
\begin{enumerate}
\item generalize the SAT definition to accommodate multi-dimensional SBP
operators that may not have a sufficient number of boundary nodes to construct adequate face cubature rules, including operators that have no boundary nodes, and;
\item develop SATs that lead to provably stable and conservative schemes for variable coefficient partial differential equations (PDEs) {\color{blue} in split form}.
\end{enumerate}

The remainder of the paper is organized as follows.  After introducing some
notation, Section~\ref{sec:SBP} reviews the definition of multi-dimensional SBP
operators from~\cite{multiSBP}. Section~\ref{sec:decomp} demonstrates the decomposition of the symmetric
component of the SBP-derivative operator by considering
 a set of auxiliary nodes on the boundary using interpolation/extrapolation operators
and face cubature rules. In Sections~\ref{sec:var} and~\ref{sec:concrete}
the general framework for construction of stable and conservative SATs is presented and two examples of SATs are discussed.
 In order to
illustrate SATs on a concrete example, Section~\ref{sec:triSATs} presents two
families of SBP operators for the triangle and describes how SATs are
constructed for these operators.  A numerical verification using the linear advection
equation with a spatially varying divergence-free velocity field is given in Section~\ref{sec:results},
and conclusions are provided in Section~\ref{sec:conclude}.
%=========================================================================
%
%	Notation and review
%
%=========================================================================
%\subsection{Notation}

\section{Notation and review of multi-dimensional summation-by-parts operators}\label{sec:SBP}

This work builds on \cite{multiSBP}, so similar notation is maintained for
consistency.  As in that work, we focus on two-dimensional operators to simplify the presentation.  One notable difference between the present work and
\cite{multiSBP} is that we consider general (smooth) bijective mappings from
physical to computational space. Domains and their boundaries in physical space
are denoted with $\Omega$ and $\Gamma$, respectively.  The corresponding sets in
computational space are given by $\Omhat$ and $\Gamhat$.  Physical-space
coordinates are represented with $(x,y) \in \Omega$, and the computational-space
coordinates are given by $(\xi,\eta) \in \Omhat$.  Several definitions and theorems are
limited to operators defined in the $\xi$ coordinate direction, since operators
defined in the other directions are analogous.

Functions are denoted with capital letters with a script type; \eg,
$\fnc{U}(\xi,\eta,t) \in L^{2}(\Omhat \times [0,T])$ denotes a square-integrable
function on the space-time domain $\hat{\Omega}\times[0,T]$.
Functions and operators are discretized on a set of $n$ nodes,
$S_{\hat{\Omega}}=\left\{(\xi_{i},\eta_{i})\right\}_{i=1}^{n} \subset \hat{\Omega}$.  The restriction of a function
to the nodes is a column vector that is represented using lower-case bold font. For example, in the case of $\fnc{U}$ we would write
\begin{equation*}
\bm{u} = \left[\fnc{U}(\xi_{1},\eta_{1}),\dots,\fnc{U}(\xi_{n},\eta_{n})\right]\Tr.
\end{equation*}

A number of definitions and theorems rely on the monomial basis,
defined below in (partial) order of nondecreasing degree.
\begin{equation*}
  \fpk(\xi,\eta) \equiv \xi^{i}\eta^{j-i}, \qquad
  k = j(j+1)/2 +i+1, \quad
  \forall\; j \in \{ 0,1,\ldots,p\}, \quad i \in \{0,1,\ldots,j\}.
\end{equation*}
The cardinality of the monomial basis of total degree $p$ is denoted
\begin{equation*}
  \nmin{p} \equiv \binom{p+d}{d},
\end{equation*}
where $d$ is the spatial dimension; for $d=2$ this gives $\nmin{p} =
(p+1)(p+2)/2$.  The monomials and their derivatives evaluated at the nodes are
represented by the $n$-vectors
\begin{align*}
  \pk &\equiv \left[ \fpk(\xi_{1},\eta_{1}),\dots,\fpk(\xi_{n},\eta_{n})\right]\Tr,\\
\text{and}\qquad
  \dxpk &\equiv \left[ \frac{\partial \fpk}{\partial \xi}(\xi_{1},\eta_{1}),\dots,
    \frac{\partial \fpk}{\partial \xi}(\xi_{n},\eta_{n})\right]\Tr.
\end{align*}

%=========================================================================
%
%	Review of multi-dimensional summation-by-parts operators
%
%=========================================================================
%\subsection{Review of multi-dimensional summation-by-parts operators}

We can now state the following definition of a multi-dimensional SBP operator
that was proposed in~\cite{multiSBP}:

\begin{definition}\label{def:SBP}
  {\bf Two-dimensional summation-by-parts operator:} Consider an open and
  bounded domain $\Omhat\subset\mathbb{R}^{2}$ with a piecewise-smooth boundary
  $\Gamhat$.  The matrix $\Dx$ is a degree $p$ SBP approximation to the first
  derivative $\frac{\partial}{\partial \xi}$ on the nodes
  $S_{\Omhat}=\left\{(\xi_{i},\eta_{i})\right\}_{i=1}^{n}$ if
  \renewcommand{\theenumi}{\Roman{enumi}}%
  \begin{enumerate}
  \item $\Dx\pk = \dxpk,\qquad \forall\; k \in \{ 1,2,\ldots,\nmin{p} \}$;
    \label{sbp:accuracy}
  \item $\Dx = \M^{-1}\Qx$, where $\M$ is symmetric positive-definite; and \label{sbp:H}
  \item $\Qx = \Sx + \frac{1}{2}\Ex$, where $\Sx\Tr=-\Sx$, $\Ex\Tr=\Ex$, and
    $\Ex$ satisfies
    \begin{equation*}
      \pk\Tr\Ex\pM=\displaystyle\oint_{\Gamhat}\fpk\fpm \nx
      \mr{d}\Gamhat,\qquad \forall\; k,m \in \{ 1,2,\ldots,\nmin{r}\},
    \end{equation*} \label{sbp:Ex}
  \end{enumerate}
  where $r\ge p$, and $\nx$ is the $\xi$ component of
  $\bm{n}=\left[\nx,\ny\right]\Tr$, the outward pointing unit normal on
  $\Gamhat$.
\end{definition}

A diagonal-norm SBP operator is one where $\M$ is a diagonal matrix.  As mentioned in the Introduction, such diagonal-norm operators are closely linked to cubature rules and, under mild assumptions on a generalized Vandermonde matrix, the existence of a cubature implies the existence of an SBP operator~\cite{multiSBP}.  Conversely, the norm matrix $\M$ is a cubature rule satisfying
\begin{equation*}
\pk\Tr\M\pM=\int_{\Omhat}\fpk\fpm\mr{d}\Omhat,
\end{equation*}
where, at a minimum, $\M$ is of degree $2p-1$, \ie the above equality holds provided $\fpk\fpm$ is at most degree $2p-1$.  In addition, since $\M$ is symmetric positive definite, it defines the finite-dimensional norm (hence the name, norm matrix) that is a degree $2p-1$ approximation to the $L^2$ norm:
\begin{equation*}
  \| \bm{u} \|_{\M}^{2} \equiv \bm{u}\Tr \M \bm{u} \approx \int_{\Omhat} \fnc{U}^{2} \mr{d}\Omhat.
\end{equation*}
We use this norm frequently in the following stability analysis.

\ignore{
\todo[inline]{Jason: I removed this because we do not use it}
Finally, it was shown in~\cite{multiSBP} that the matrices $\Qx$ and $\Sx$ satisfy
\begin{align*}
\pk\Tr \Qx \pM &= \int_{\Omhat} \fpk \dxfpm \mr{d} \Omhat, \\
 \text{and}\qquad
\pk\Tr \Sx \pM &= \int_{\Omhat} \fpk \dxfpm \mr{d}\Omhat
- \frac{1}{2}\oint_{\Gamhat} \fpk \fpm \nx \mr{d}\Gamhat,
\end{align*}
where $k,m \leq \nmin{r}$, and $\fpk \fpm$ is at most degree $2p$.
}
%=========================================================================
%
%	Decomposition of $\mat{E}$
%
%=========================================================================
\section{Decomposition of $\Ex$}\label{sec:decomp}

The pointwise nature of SATs complicates their direct application to
multi-dimensional SBP operators, which may not have any nodes on the boundary of
their domain.  Fortunately, as we show in this section, the $\Ex$ and $\Ey$
matrices of many multi-dimensional SBP operators can be decomposed in terms of
nodes that lie on the boundary of $\Omhat$.  These auxiliary nodes provide a
straightforward means of applying traditional SAT penalties.

In order to proceed, we introduce some assumptions regarding the reference
element, $\Omhat$, and its boundary, $\Gamhat$.

\begin{assume}\label{assume:Gamma}
  The reference element $\Omhat$ is a polygon, and its boundary $\Gamhat$ is
  piecewise linear with $\Gamhat = \bigcup_{j=1}^{\kappa} \Gamhat_{j}$ and
  $\bigcap_{j=1}^{\kappa} \Gamhat_{j} = \emptyset$.  Furthermore, for each
  $\Gamhat_{j}$ there exists a strong cubature rule, with nodes $S_{\Gamhat_{j}}
  = \{ (\xi_{i}^{(j)},\eta_{i}^{(j)}) \}_{i=1}^{n_{j}}$ and weights
  $\{b_{i}^{(j)}\}_{i=1}^{n_{j}}$, that exactly integrates polynomial integrands
  of degree $q \geq 2r$, where $r$ is the integer used in property~\ref{sbp:Ex}
  of Definition~\ref{def:SBP}.
\end{assume}

\begin{remark}
  The assumption that the reference element is a polygon is reasonable; for
  example, the most common finite elements are polytopes in computational space.
\end{remark}

Assumption~\ref{assume:Gamma} ensures that we can evaluate element boundary
fluxes with sufficient accuracy in computational space.  For example, the
cubature rule for each $\Gamhat_{j}$ allows us to write
\begin{equation*}
  \int_{\Gamhat_{j}} \fpk(\xi,\eta) \nx \,\mr{d}\Gamhat =
  n_{\xi j} \sum_{i=1}^{n_{j}} b_{i}^{(j)} \fpk\left(\xi_{i}^{(j)},\eta_{i}^{(j)}\right),
\end{equation*}
for all polynomials $\fpk$ of total degree $q$ or less, where $q \geq 2r$. A
similar expression holds for fluxes in the $\eta$ direction.  Note that $n_{\xi
  j}$ is constant over $\Gamhat_{j}$, due to linearity, so it can be pulled
outside the integral.

Our approach to imposing boundary and interface conditions pointwise is to
interpolate/extrapolate the solution from the element nodes onto the cubature
nodes $S_{\Gamhat_{j}} = \{(\xi_{i}^{(j)},\eta_{i}^{(j)}\}_{i=1}^{n_{j}}$ using
operators $\mat{R}_{j}$.  To decompose $\Ex$ and $\Ey$ , these
interpolation/extrapolation operators must be sufficiently accurate.
Specifically,
\begin{equation*}
\left(\R_j \pk\right)_{i} = \fpk\left(\xi_{i}^{(j)},\eta_{i}^{(j)}\right), \quad i=1,2,\ldots,n_{j}, \qquad \forall \; k \in \{1,2,\ldots,\nmin{r}\},
\end{equation*}
where $r \geq p$.

We first prove that we can construct $\Ex$ that satisfy the requirements of
Definition \ref{def:SBP} using the interpolation/extrapolation operators,
$\mat{R}$, and the face-cubature rules from Assumption~\ref{assume:Gamma}.

\begin{thrm}\label{thm:SATdecomp}
  Let Assumption~\ref{assume:Gamma} hold and let
  $S_{\Omhat}=\left\{\left(\xi_{i},\eta_{i}\right)\right\}_{i=1}^{n}$ be a given
  a nodal distribution on the domain $\Omhat$.  Then, a matrix $\Ex$ that
  satisfies the requirements of Definition \ref{def:SBP} can be constructed as
  \begin{equation}
    \Ex = \sum_{j=1}^{\kappa} n_{\xi j} \R_{j}\Tr \B_{j} \R_{j}, \label{eq:SATdecomp}
  \end{equation}
  where $\B_{j} =
  \mydiag\left(b_{1}^{(j)},b_{2}^{(j)},\ldots,b_{n_{j}}^{(j)}\right)$ is a
  diagonal matrix holding the cubature weights for $\Gamhat_{j}$, and $\R_{j}
  \in \mathbb{R}^{n_{j} \times n}$ is a degree $r \geq p$
  interpolation/extrapolation operator from the nodes $S_{\Omhat}$ to the nodes
  of the reference boundary domain, $S_{\Gamhat_{j}}$.
\end{thrm}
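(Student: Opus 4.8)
The plan is to verify directly that the proposed matrix $\Ex = \sum_{j=1}^{\kappa} n_{\xi j} \R_{j}\Tr \B_{j} \R_{j}$ satisfies the two conditions implicit in property~\ref{sbp:Ex} of Definition~\ref{def:SBP}: first that $\Ex$ is symmetric, and second that it reproduces the boundary integral $\pk\Tr\Ex\pM = \oint_{\Gamhat}\fpk\fpm\,\nx\,\mr{d}\Gamhat$ for all $k,m \le \nmin{r}$. Symmetry is immediate: each summand $n_{\xi j}\R_j\Tr\B_j\R_j$ is symmetric because $\B_j$ is diagonal (hence symmetric), and a real scalar times $M\Tr D M$ is symmetric for any $M$ and symmetric $D$; a finite sum of symmetric matrices is symmetric. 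So the only substantive work is the reproduction identity.

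For the reproduction identity, I would fix $k,m \in \{1,\dots,\nmin{r}\}$ and compute $\pk\Tr\Ex\pM$ by distributing over the sum:
\begin{equation*}
\pk\Tr\Ex\pM = \sum_{j=1}^{\kappa} n_{\xi j}\, \pk\Tr \R_{j}\Tr \B_{j} \R_{j} \pM
= \sum_{j=1}^{\kappa} n_{\xi j}\, (\R_{j}\pk)\Tr \B_{j} (\R_{j}\pM).
\end{equation*}
Now invoke the accuracy property of the interpolation/extrapolation operators: since $k,m \le \nmin{r}$, we have $(\R_j\pk)_i = \fpk(\xi_i^{(j)},\eta_i^{(j)})$ and likewise for $\pM$. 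Because $\B_j = \mydiag(b_1^{(j)},\dots,b_{n_j}^{(j)})$, the quadratic form $(\R_j\pk)\Tr\B_j(\R_j\pM)$ is exactly $\sum_{i=1}^{n_j} b_i^{(j)} \fpk(\xi_i^{(j)},\eta_i^{(j)})\fpm(\xi_i^{(j)},\eta_i^{(j)})$. The product $\fpk\fpm$ is a polynomial of total degree at most $2r$, and by Assumption~\ref{assume:Gamma} the cubature rule on $\Gamhat_j$ integrates polynomials of degree $q \ge 2r$ exactly, so this sum equals $\int_{\Gamhat_j}\fpk\fpm\,\mr{d}\Gamhat$ (note that on $\Gamhat_j$, by linearity of the facet, $\fpk\fpm$ restricted to $\Gamhat_j$ is indeed a polynomial of the stated degree along the facet, and $n_{\xi j}$ is constant on $\Gamhat_j$ so it can be absorbed appropriately). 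Multiplying by $n_{\xi j}$ and recognizing the pullback of the line-integral gives $n_{\xi j}\int_{\Gamhat_j}\fpk\fpm\,\mr{d}\Gamhat = \int_{\Gamhat_j}\fpk\fpm\,\nx\,\mr{d}\Gamhat$. Summing over $j$ and using $\Gamhat = \bigcup_j \Gamhat_j$ with the facets meeting only on a set of measure zero yields $\pk\Tr\Ex\pM = \oint_{\Gamhat}\fpk\fpm\,\nx\,\mr{d}\Gamhat$, which is property~\ref{sbp:Ex}.

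Finally, one should observe that given a symmetric $\Ex$ satisfying the integral condition, the rest of Definition~\ref{def:SBP} is automatically realizable: set $\Sx \equiv \Qx - \tfrac12\Ex$ where $\Qx \equiv \M\Dx$ for any existing degree-$p$ SBP operator $\Dx$ on the node set (or, more carefully, note that the decomposition of $\Ex$ is what is being asserted, and the skew part $\Sx$ and norm $\M$ come from the underlying operator whose existence is presumed), so no additional construction is needed — the theorem only claims $\Ex$ itself can be built this way. The main obstacle, such as it is, is bookkeeping: being careful that the degree count $\deg(\fpk\fpm) \le 2r \le q$ lines up with Assumption~\ref{assume:Gamma}, that the constancy of $n_{\xi j}$ on each linear facet is what lets the normal component come outside the facet integral, and that the accuracy hypothesis on $\R_j$ is stated precisely for $k \le \nmin{r}$ — all of which are already in place in the excerpt. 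There is no hard analytic step; the result is essentially a change-of-summation-order combined with the two exactness hypotheses.
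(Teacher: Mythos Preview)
Your proposal is correct and follows essentially the same approach as the paper: verify symmetry of each summand $n_{\xi j}\R_j\Tr\B_j\R_j$, then establish the accuracy condition by applying the exactness of $\R_j$ on degree-$r$ polynomials, the face-cubature exactness for degree $2r \le q$, the constancy of $n_{\xi j}$ on each linear facet, and additivity of the boundary integral. Your final paragraph about $\Sx$ and $\M$ is unnecessary --- the theorem only asserts that $\Ex$ satisfies the symmetry and accuracy requirements of Property~\ref{sbp:Ex}, not that a full SBP operator exists (that is the content of the subsequent Theorem~\ref{thm:exists}).
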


\begin{proof}
  The terms $\R_{j}\Tr \B_{j} \R_{j}$ are symmetric by construction.  Therefore,
  we need only show that the accuracy conditions of Property~\ref{sbp:Ex} hold.
  Since the $\R_{j}$ are exact for degree $r \geq p$ polynomials, we have,
  $\forall\; k,m \in \{ 1,2,\ldots,\nmin{r} \}$,
  \begin{align*}
    \pk\Tr\Extilde\pM &= \sum_{j=1}^{\kappa} n_{\xi j} \pk\Tr \R_{j}\Tr \B_{j} \R_{j} \pM \\
    &= \sum_{j=1}^{\kappa} n_{\xi j} \sum_{i=1}^{n_{j}} b_{i}^{(j)} \fpk\left(\xi_{i}^{(j)},\eta_{i}^{(j)}\right)
    \fpm\left(\xi_{i}^{(j)},\eta_{i}^{(j)}\right) \\
    &= \sum_{j=1}^{\kappa}
    \int_{\Gamhat_{j}} \fpk\fpm n_{\xi j} \, \mr{d}\Gamhat,
  \end{align*}
  where we have used the linearity of the mappings (see
  Assumption~\ref{assume:Gamma}) and the fact that the product $\fpk\fpm$ has
  total degree less than or equal to $2r\leq q$.  The result follows by the
  additive property of integrals.\qed
\end{proof}

Next, we prove that we can construct a multi-dimensional SBP operator $\Dx$ from
a given, sufficiently accurate, strong cubature rule and an $\Ex$ matrix that
satisfies the requirements of Definition~\ref{def:SBP}.  For this result, we
will need the degree $p$ (rectangular) Vandermonde matrix
\begin{equation*}%\label{eq:vp}
\Vp \equiv \left[\pki{1},\pki{2},\dots,\pki{\nmin{p}}\right],
\end{equation*}
as well as the associated matrix containing the projection of the $\xi$
derivatives of the monomials
\begin{equation*}%\label{eq:vpxi}
\Vpxi\equiv\left[\pki{1}',\pki{2}',\dots,\pki{\nmin{p}}'\right].
\end{equation*}

\begin{thrm}\label{thm:exists}
  Let the diagonal elements of $\M$ and the nodes
  $S_{\Omhat}=\left\{\left(\xi_{i},\eta_{i}\right)\right\}_{i=1}^{n}$ define a
  degree $2p-1$ strong cubature rule on the domain $\Omhat$.  If the Vandermonde
  matrix, $\Vp$, associated with the nodes $S_{\Omhat}$ has full column rank, and
  $\Ex \in \mathbb{R}^{n\times n}$ is symmetric and satisfies
  Property~\ref{sbp:Ex} of Definition~\ref{def:SBP}, then there exists at least
  one degree $p$ SBP operator, $\Dx = \M^{-1}(\Sx + \frac{1}{2}\Ex)$, based on
  the given nodes $S_{\Omhat}$ and matrices $\M$ and $\Ex$.
\end{thrm}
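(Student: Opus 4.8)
The plan is to construct the skew-symmetric part $\Sx$ explicitly and verify that the resulting $\Dx = \M^{-1}(\Sx + \tfrac12\Ex)$ satisfies the accuracy condition~\ref{sbp:accuracy} of Definition~\ref{def:SBP}. The key observation is that property~\ref{sbp:accuracy}, combined with $\Dx = \M^{-1}\Qx$ and $\Qx = \Sx + \tfrac12\Ex$, forces $\Sx$ to satisfy $\Sx \pk = \M \dxpk - \tfrac12 \Ex \pk$ for all $k \le \nmin{p}$; equivalently, in matrix form, $\Sx \Vp = \M \Vpxi - \tfrac12 \Ex \Vp$. So the task reduces to: given the prescribed action of $\Sx$ on the column space of $\Vp$, show that a skew-symmetric matrix with that action exists.

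First I would establish the crucial consistency identity: the matrix $\mat{W} \equiv \M \Vpxi - \tfrac12 \Ex \Vp$ satisfies $\Vp\Tr \mat{W} = -\mat{W}\Tr \Vp$, i.e. $\Vp\Tr \mat{W}$ is skew-symmetric. This follows by computing $\pk\Tr \mat{W} \pM$ for $k,m \le \nmin{p}$: using the degree-$(2p-1)$ cubature property of $\M$ one gets $\pk\Tr \M \Vpxi$ (the $m$-th entry) equals $\int_{\Omhat} \fpk \, \partial_\xi \fpm \, \mr{d}\Omhat$ whenever $\fpk \partial_\xi \fpm$ has degree $\le 2p-1$, which holds since $\fpk$ has degree $\le p$ and $\partial_\xi \fpm$ has degree $\le p-1$; and using property~\ref{sbp:Ex} (valid since $r \ge p$), $\pk\Tr \Ex \pM = \oint_{\Gamhat} \fpk \fpm \nx \, \mr{d}\Gamhat$. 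Then integration by parts on $\Omhat$ gives $\int \fpk \partial_\xi \fpm + \int \fpm \partial_\xi \fpk = \oint \fpk \fpm \nx$, so $\pk\Tr \mat{W} \pM + \pM\Tr \mat{W} \pk = 0$, which is precisely the skew-symmetry of $\Vp\Tr \mat{W}$.

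Next, with this consistency in hand, I would construct $\Sx$. Since $\Vp$ has full column rank, write $\Vp^{\dagger} = (\Vp\Tr\Vp)^{-1}\Vp\Tr$ for its left inverse and let $\mat{P} = \Vp \Vp^{\dagger}$ be the orthogonal projector onto $\mr{range}(\Vp)$. One candidate is
\begin{equation*}
\Sx = \mat{W}\Vp^{\dagger} + (\Vp^{\dagger})\Tr \mat{W}\Tr - (\Vp^{\dagger})\Tr (\Vp\Tr \mat{W}) \Vp^{\dagger} + (\mat{I} - \mat{P})\,\mat{Z}\,(\mat{I} - \mat{P}) - (\mat{I}-\mat{P})\Tr \mat{Z}\Tr (\mat{I}-\mat{P}),
\end{equation*}
where $\mat{Z}$ is an arbitrary matrix parametrizing the remaining freedom; the last two terms are manifestly skew-symmetric and vanish on $\mr{range}(\Vp)$, while a direct check using the skew-symmetry of $\Vp\Tr\mat{W}$ shows the first three terms are skew-symmetric and that $\Sx \Vp = \mat{W}$. (Concretely: $\Sx\Vp = \mat{W}\Vp^\dagger\Vp + (\Vp^\dagger)\Tr\mat{W}\Tr\Vp - (\Vp^\dagger)\Tr(\Vp\Tr\mat{W}) = \mat{W} + (\Vp^\dagger)\Tr(\mat{W}\Tr\Vp + \Vp\Tr\mat{W}) = \mat{W}$, using $\mat{W}\Tr\Vp = -\Vp\Tr\mat{W}$.) Setting $\Qx = \Sx + \tfrac12\Ex$ and $\Dx = \M^{-1}\Qx$, we have $\Dx\Vp = \M^{-1}(\mat{W} + \tfrac12\Ex\Vp) = \M^{-1}\M\Vpxi = \Vpxi$, which is exactly property~\ref{sbp:accuracy}; properties~\ref{sbp:H} and~\ref{sbp:Ex} hold by hypothesis on $\M$ and $\Ex$.

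The main obstacle is the consistency identity in the second paragraph — i.e.\ verifying that the prescribed boundary behavior encoded in $\Ex$ is exactly compatible with integration by parts against the cubature defined by $\M$. This is where the degree bookkeeping ($2p-1$ for the volume cubature, $r \ge p$ for $\Ex$) is essential and must be checked carefully. Once that identity is in place, producing a skew-symmetric $\Sx$ with the required action is a routine linear-algebra construction (existence of a skew-symmetric extension of a consistently-prescribed partial operator), and the rest is immediate. I would also remark that the freedom in $\mat{Z}$ shows the SBP operator is generally non-unique, consistent with the phrase ``at least one'' in the statement.
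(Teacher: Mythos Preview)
Your strategy is sound and mirrors the paper's: both arguments hinge on the compatibility identity
\[
\Vp\Tr\M\Vpxi + \Vpxi\Tr\M\Vp = \Vp\Tr\Ex\Vp,
\]
which is exactly your statement that $\Vp\Tr\mat{W}$ is skew with $\mat{W}=\M\Vpxi-\tfrac12\Ex\Vp$. The paper simply cites this identity from earlier references, whereas you derive it carefully from the $2p-1$ cubature accuracy of $\M$, Property~\ref{sbp:Ex}, and integration by parts; your degree bookkeeping is correct. From there the paper extends $\Vp$ to an invertible $\Vtilde=[\Vp\;\W]$, writes $\Sxtilde=\Vtilde\Tr\Sx\Vtilde$ in $2\times2$ block form, observes the top-left block is skew by compatibility, and then \emph{solves} for the free columns $\Wx$ to force the remaining blocks into skew-symmetric position. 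Your pseudoinverse/projector construction is the orthogonal-complement version of the same idea and is equally valid in principle.

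However, your explicit formula contains a sign error: the second term should be $-(\Vp^\dagger)\Tr\mat{W}\Tr$, not $+(\Vp^\dagger)\Tr\mat{W}\Tr$. With the plus sign, the first three terms are neither skew-symmetric nor do they satisfy $\Sx\Vp=\mat{W}$. Your verification hides this because the step
\[
(\Vp^\dagger)\Tr\mat{W}\Tr\Vp - (\Vp^\dagger)\Tr(\Vp\Tr\mat{W}) \;=\; (\Vp^\dagger)\Tr(\mat{W}\Tr\Vp + \Vp\Tr\mat{W})
\]
is itself a sign slip (the bracket should read $\mat{W}\Tr\Vp - \Vp\Tr\mat{W}=-2\Vp\Tr\mat{W}\neq 0$). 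The corrected candidate
\[
\Sx = \mat{W}\Vp^\dagger - (\Vp^\dagger)\Tr\mat{W}\Tr - (\Vp^\dagger)\Tr(\Vp\Tr\mat{W})\Vp^\dagger + (\mat{I}-\mat{P})(\mat{Z}-\mat{Z}\Tr)(\mat{I}-\mat{P})
\]
does satisfy both $\Sx\Vp=\mat{W}$ and $\Sx\Tr=-\Sx$, and with that fix your argument goes through. This is a computational slip, not a conceptual gap; the ``routine linear-algebra construction'' you invoke is indeed routine once the compatibility identity is in hand.
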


\begin{proof}
  We must show that, for the given $\M$ and $\Ex$, we can find a skew symmetric
  $\Sx$ that satisfies the accuracy conditions~\ref{sbp:accuracy}.  The SBP
  accuracy conditions can be recast as the following set of matrix conditions:
  \begin{equation}\label{eq:thrm2accuracy}
    \Dx\overbrace{\left[\Vp\; \W\right]}^{\Vtilde} = \overbrace{\left[\Vp\; \Wx\right]}^{\Vxtilde},
  \end{equation}
  where $\W$ is a set of linearly independent vectors, of size
  $n\times(n-\nmin{p})$, such that $\Vtilde$ is invertible, \eg, a basis for the
  null space of $\Vp$.  The matrix $\Wx$ is of size $n\times(n-\nmin{p})$ and is
  to be determined.  Using \eqref{eq:thrm2accuracy} and the multi-dimensional
  SBP definition, we can solve for $\Sx$ as
  \begin{equation}\label{eq:thrm2step0}
    \Sx = \M\Vxtilde\Vtilde^{-1}-\frac{1}{2}\Ex.
  \end{equation}

  What remains to be shown is that $\Sx$ can be constructed to be skew symmetric
  using $\Wx$.  Rather than doing so directly, we show that an associated matrix
  is skew symmetric; left and right multiplying \eqref{eq:thrm2step0} by
  $\Vtilde\Tr$ and $\Vtilde$, respectively, results in
  \begin{equation}\label{eq:thrm2step1}
    \Sxtilde \equiv \Vtilde\Tr\Sx\Vtilde=
    \begin{bmatrix}
      \V\Tr\M\Vx-\frac{1}{2}\V\Tr\Ex\V & \V\Tr\M\Wx-\frac{1}{2}\V\Tr\Ex\W \\
      \W\Tr\M\Vx-\frac{1}{2}\W\Tr\Ex\V & \W\Tr\M\Wx-\frac{1}{2}\W\Tr\Ex\W
    \end{bmatrix}.
  \end{equation}
  If we can show that $\Sxtilde$ is skew symmetric, this will imply skew
  symmetry for $\Sx$.

  We first show that the block $\Sxtilde(1:\nmin{p},1:\nmin{p})$ is skew symmetric;
  \begin{equation*}
    \Sxtilde(1:\nmin{p},1:\nmin{p})+\Sxtilde(1:\nmin{p},1:\nmin{p})\Tr = 
    \V\Tr\M\Vx+\Vx\Tr\M\V-\V\Tr\Ex\V = 0,
  \end{equation*}
  where we have used the compatibility conditions $\Vp\Tr\M\Vpxi+\Vpxi\Tr\M\Vp =
  \Vp\Tr\Ex\Vp$ \cite{multiSBP,DCDRF2014,Strand1994,Kreiss1974}.
 
  Next, notice that the entries in the lower-left block
  $\Sxtilde(\nmin{p}+1:n,1:\nmin{p})$ are fully specified by $\Vp$, $\W$, and
  $\Vx$.  To make the rest of $\Sxtilde$ skew symmetric, we specify the columns
  of $\Wx$ in order to match the upper-right block to the (negative transposed)
  lower-left block and a zero lower-right block.  In other words, for
  skew-symmetry we require that
  \begin{equation*}
    \Sxtilde(:,\nmin{p}+j) = \Vtilde\Tr\M\Wx - \frac{1}{2}\Vtilde\Tr\Ex\W
    = \begin{bmatrix}
      -\Vx\Tr\M\W + \frac{1}{2} \V\Tr\Ex\W \\
      \mat{0} 
    \end{bmatrix}.
  \end{equation*}
  Rearranging we have
  \begin{equation*}
    \Vtilde\Tr\M\Wx = \frac{1}{2}\Vtilde\Tr\Ex\W + \begin{bmatrix}
      -\Vx\Tr\M\W + \frac{1}{2} \V\Tr\Ex\W \\
      \mat{0} 
    \end{bmatrix},
  \end{equation*}
  which we can solve for $\Wx$, because $\Vtilde\Tr\M$ is invertible. This
  particular $\Wx$ ensures that $\Sxtilde$ is skew symmetric, and, therefore,
  guarantees that $\Sx$ is skew symmetric. \qed
\end{proof}

Theorems~\ref{thm:SATdecomp} and~\ref{thm:exists} imply the following:
\begin{corollary}\label{lemma:exists}
  If the hypotheses of Theorem~\ref{thm:SATdecomp} and~\ref{thm:exists} are met,
  then there exists at least one degree $p$ SBP operator whose $\Ex$ has the
  decomposition~\eqref{eq:SATdecomp}.
\end{corollary}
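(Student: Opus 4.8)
The plan is simply to compose the two preceding theorems, so the argument is essentially a bookkeeping exercise. First I would invoke Theorem~\ref{thm:SATdecomp}: under Assumption~\ref{assume:Gamma} and for the given nodal distribution $S_{\Omhat}$, it produces the explicit matrix $\Ex = \sum_{j=1}^{\kappa} n_{\xi j} \R_{j}\Tr \B_{j} \R_{j}$, and its proof shows that this matrix is symmetric (by construction) and satisfies Property~\ref{sbp:Ex} of Definition~\ref{def:SBP}. That is precisely the standing assumption on $\Ex$ that Theorem~\ref{thm:exists} requires of its input, so no independent hypothesis about $\Ex$ needs to be granted separately — it is supplied by the conclusion of Theorem~\ref{thm:SATdecomp}.

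Next I would feed this particular $\Ex$, together with the norm matrix $\M$ and the nodes $S_{\Omhat}$, into Theorem~\ref{thm:exists}. Its remaining hypotheses — that the diagonal entries of $\M$ and the nodes $S_{\Omhat}$ define a degree $2p-1$ strong cubature rule, and that the Vandermonde matrix $\Vp$ has full column rank — are exactly the items we are granting in the statement of the corollary. Theorem~\ref{thm:exists} then yields a skew-symmetric $\Sx$ (obtained through the auxiliary matrix $\Wx$ as in that proof) such that $\Dx = \M^{-1}(\Sx + \tfrac12 \Ex)$ is a degree $p$ SBP operator on $S_{\Omhat}$. Because the $\Ex$ entering this construction is literally the one from Theorem~\ref{thm:SATdecomp}, the resulting operator has the decomposition~\eqref{eq:SATdecomp}, which is the claim.

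There is no real obstacle here: the only thing to check is interface compatibility between the two theorems, namely that the output of Theorem~\ref{thm:SATdecomp} (a symmetric matrix obeying Property~\ref{sbp:Ex}) is an admissible input for Theorem~\ref{thm:exists}, which it is by construction. No further estimates or constructions are required, so the proof of the corollary reduces to citing the two theorems in sequence.
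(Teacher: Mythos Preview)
Your proposal is correct and matches the paper's approach: the paper presents the corollary as an immediate consequence of Theorems~\ref{thm:SATdecomp} and~\ref{thm:exists} without a separate proof, and your argument simply makes explicit the obvious chaining of the two results.
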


\begin{remark}
  The implication of Corollary~\ref{lemma:exists} is that $\Dx$ exist for which
  $\Ex$ is constructed as the sum of matrices, each of which is associated with
  a face $\Gamhat_{j}$; more importantly, it expresses these constituent
  matrices in terms of the face cubature points and the $\mat{R}_{j}$ operators,
  facilitating pointwise imposition of SATs.
\end{remark}

\ignore{
We close this section by examining the interesting case of a square full rank Vanderomnde matrix constructed as
\begin{equation}
\mat{V}_{}
\end{equation}
In the case where $n = \nmin{p}$ and we have a full rank Vandermonde matrix $\Vp$, which occurs if we have a full polynomial basis, then 
we have the following result:

\begin{thrm}\label{thm:unique}
Given a nodal distribution $S_{\Omhat}\left\{\left(\xi_{i},\eta_{i}\right)\right\}_{i=1}^{n}$, for which $n=\nmin{p}$, 
and the Vandermonde matrix $\Vp$ is full rank, 
then there exists a unique difference operator $\Dxtilde$ of degree $p$.
\end{thrm}
\begin{proof}
The accuracy conditions on $\Dxtilde$ are the same as for a degree $p$ SBP operator (see Definition~\ref{def:SBP} conditions ~\ref{sbp:accuracy})
 therefore, $\Dxtilde$ is uniquely give by
\begin{equation*}
\Dxtilde = \Vx\Vp^{-1}.
\end{equation*}
\end{proof}
\qed

We can now prove that on nodal distributions with $n=\nmin{p}$ that have a full rank Vandermonde matrix and a diagonal norm $\M$ of at least degree $2p-1$, 
the resulting SBP operator, while unique, has non-unique decomposition.

\begin{thrm}\label{thm:nonunique}
Let Assumption~\ref{assume:Gamma} hold on the domain $\Omhat$ for at least two surface nodal distributions $S_{\Gamhat,1}$ and $S_{\Gamhat,2}$. 
Then, if for the nodal distribution $S_{\Omhat}\left\{\left(\xi_{i},\eta_{i}\right)\right\}_{i=1}^{n}$,
$n=\nmin{p}$, the Vandermonde matrix $\Vp$ is full rank, and there 
exists a diagonal-norm matrix $\M$, of at least degree $2p-1$, then the unique SBP operator, $\Dx$, that exists has a non-unique decomposition. 
\end{thrm}
\begin{proof}
Since Assumption~\ref{assume:Gamma} hold for two surface nodal distributions $S_{\Gamhat,1}$ 
and $S_{\Gamhat,2}$ we can construct $\mat{E}_{\xi,1}$ and $\mat{E}_{\xi,2}$ as per Theorem~\ref{thm:SATdecomp}. By Theorem~\ref{thm:exists} two SBP operators exist, 
however, by Theorem~\ref{thm:unique} they must be the same.\qed
\end{proof}
}
\ignore{
We are now in a position to state and prove the main result of this section.

\begin{thrm}\label{thm:SATdecomp}
  Let Assumption~\ref{assume:Gamma} hold and let $\Dx = \M^{-1}\Qx$ be a degree $p$ SBP approximation of the first partial derivative with respect to $\xi$ on the domain $\Omhat$.  Then the symmetric part of $\Qx$ can be written as
  \begin{equation}
    \frac{1}{2}\Ex = \frac{1}{2} \sum_{j=1}^{\kappa} n_{\xi j} \R_{j}\Tr \B_{j} \R_{j}, \label{eq:SATdecomp}
  \end{equation}
  where $\B_{j} = \mydiag\left(b_{1}^{(j)},b_{2}^{(j)},\ldots,b_{n_{j}}^{(j)}\right)$ is a diagonal matrix holding the cubature weights for $\Gamhat_{j}$, and $\R_{j} \in \mathbb{R}^{n_{j} \times n}$ is a
  degree $r \geq p$ interpolation/extrapolation operator from the nodes
  $S_{\Omhat}$ to the nodes of the reference boundary
  domain, $S_{\Gamhat_{j}}$.
\end{thrm}

\begin{proof}
  The terms $\R_{j}\Tr \B_{j} \R_{j}$ are symmetric by
  construction.  Therefore, we need only show that the accuracy
  conditions of Property~\ref{sbp:Ex} hold.  Since the $\R_{j}$ are exact for
  degree $r \geq p$ polynomials, we have, $\forall k,m \in \{ 1,2,\ldots,\nmin{r} \}$,
  \begin{align*}
    \pk\Tr\Ex\pM &= \sum_{j=1}^{\kappa} n_{\xi j} \pk\Tr \R_{j}\Tr \B_{j} \R_{j} \pM \\
    &= \sum_{j=1}^{\kappa} n_{\xi j} \sum_{i=1}^{n_{j}} b_{i}^{(j)} \fpk\left(\xi_{i}^{(j)},\eta_{i}^{(j)}\right)
    \fpm\left(\xi_{i}^{(j)},\eta_{i}^{(j)}\right) \\
    &= \sum_{j=1}^{\kappa}
    \int_{\Gamhat_{j}} \fpk\fpm n_{\xi j} \, \mr{d}\Gamhat,
  \end{align*}
  where we have used the linearity of the mappings (see
  Assumption~\ref{assume:Gamma}) and the fact that the product $\fpk\fpm$ has
  total degree less than or equal to $2r\leq q$.  The result follows by the
  additive property of integrals.\qed
\end{proof}
}
%====================================================================================================================================================================
%
%	Linear variable-coefficient PDEs
%
%=========================================================================
\section{Linear variable-coefficient PDEs}\label{sec:var}
\ignore{
While the ideas presented in this paper can be used to construct SATs for PDEs in various forms,
one of our goals is to develop SATs suitable for split forms of nonlinear PDEs.
Therefore, in this section, we develop a general framework for constructing SATs to impose
boundary and inter-element conditions weakly for multi-dimensional SBP
operators in split form. We do so by examining the variable-coefficient linear advection
equation in two dimensions in skew-symmetric form. We review the use of the
energy method to prove stability for the continuous problem and then apply the
same ideas to prove stability of the semi-discrete equations. The goal of this
section is to determine a set of conditions that, if satisfied, lead to stable
and conservative semi-discrete forms.
}
While the ideas presented in this paper can be used to construct SATs for PDEs in various forms,
one of our goals is to develop SATs suitable for split forms of nonlinear PDEs. Such split forms can be used to prove
nonlinear stability, for example of the Burgers and Euler equations, and more generally are related to
the entropy stability theory in Refs.\ \cite{Harten1983,Hughes1986,Matteo2015,Carpenter2014,Fisher2013b}. Moreover, such split-forms can be advantageous in improving robustness \cite{Gassner2016}.
Therefore, in this section, we develop a general framework for constructing SATs to impose
boundary and inter-element conditions weakly for multi-dimensional SBP
operators in split form. We do so by examining the variable-coefficient linear advection
equation in two dimensions in skew-symmetric form. We review the use of the
energy method to prove stability for the continuous problem and then apply the
same ideas to prove stability of the semi-discrete equations. The goal of this
section is to determine a set of conditions that, if satisfied, lead to stable
and conservative semi-discrete forms.
%=========================================================================
%
%	Stability of the variable-coefficient linear-convection equation
%
%=========================================================================
\subsection{Stability and conservation of the variable-coefficient linear advection equation}
Consider the divergence and skew-symmetric forms of the linear advection equation with a spatially
varying velocity field, $\veclambda =\left[\lamxx,\lamyy\right]\Tr$:
\begin{equation}\label{eq:LCvar}
\begin{split}
\frac{\partial\fnc{U}}{\partial t}=&-\vecnabla\cdot\left(\veclambda \fnc{U} \right)\\
=&-\frac{1}{2}\vecnabla \cdot\left(\veclambda \fnc{U}\right)
-\frac{1}{2}\veclambda \cdot\nabla \fnc{U}
-\frac{1}{2}\fnc{U} \vecnabla\cdot\veclambda,
\end{split}
\end{equation}
for all $(x,y)\in\Omega$ and $t\ge 0$.  The initial and boundary conditions are
\begin{equation}\label{eq:icbc}
\begin{alignedat}{2}
\fnc{U}(x,y,0) &= \fnc{F}(x,y),&
\qquad &\forall\; (x,y)\in\Omega,\\
\fnc{U}(x,y,t) &= \fnc{G}(x,y,t),&
\qquad &\forall\; (x,y)\in\Gamma^{-},\quad t\ge 0,
\end{alignedat}
\end{equation}
respectively, where $\Gamma$ is subdivided into the inflow boundary,
\begin{equation*}
\Gamma^{-}=\left\{(x,y)\in\Gamma\,|\, \nxx\lamxx+\nyy\lamyy<0\right\},
\end{equation*}
and the outflow boundary, $\Gamma^{+}=\Gamma\backslash\Gamma^{-}$.

We use the energy method --- applying a similar analysis to that in Ref.~\cite{Kopriva2014} to our two-dimensional problem --- to prove that the problem defined by \eqref{eq:LCvar} and
\eqref{eq:icbc} is stable; those interested in further details are referred to
the texts~\cite{Gustafsson2013,Kreiss2004}.  Multiplying the divergence form of \eqref{eq:LCvar} by
$\fnc{U}$ and integrating in space results in
\begin{equation}\label{eq:energy1}
\int_{\Omega}\fnc{U}\frac{\partial\fnc{U}}{\partial t}\mr{d}\Omega=
-\int_{\Omega}\left(\fnc{U}\frac{\partial\lamxx\fnc{U}}{\partial x}+
\fnc{U}\frac{\partial\lamyy\fnc{U}}{\partial y}\right)\mr{d}\Omega.
\end{equation}
Equation (\ref{eq:energy1}) can be expressed in two alternative forms: i) using
the Leibniz rule on the left-hand side and the product rule on the right-hand
side, or; ii) using the Leibniz rule on the left-hand side and expanding the
derivative terms on the right-hand side. Adding these two alternative
expressions leads to
\begin{equation}\label{eq:energy4}
\frac{\mr{d}\|\fnc{U}\|^{2}}{\mr{d}t}=
-\int_{\Omega}\left(\frac{\partial\lamxx\fnc{U}^{2}}{\partial x}+
\frac{\partial\lamyy\fnc{U}^{2}}{\partial y}+
\fnc{U}^{2}\frac{\partial\lamxx}{\partial x}+
\fnc{U}^{2}\frac{\partial\lamyy}{\partial y}
\right)\mr{d}\Omega.
\end{equation}
Using integration by parts on the first two terms of the right-hand side of
(\ref{eq:energy4}) leads to
\begin{equation}\label{eq:energy5}
\frac{\mr{d}\|\fnc{U}\|^{2}}{\mr{d}t}=
-\oint_{\Gamma}\fnc{U}^{2}\left(\nxx\lamxx+\nyy\lamyy\right)\mr{d}\Gamma-
\int_{\Omega}
\fnc{U}^{2}\left(\frac{\partial\lamxx}{\partial x}+
\frac{\partial\lamyy}{\partial y}\right)
\mr{d}\Omega.
\end{equation}
Assuming that the divergence of $\veclambda$ is bounded, that is
\begin{equation*}
\alpha = \max\limits_{(x,y)\in\Omega}\left(\frac{\partial\lamxx}{\partial x}+\frac{\partial\lamyy}{\partial y}\right) \leq \infty,
\end{equation*}
we have that
\ignore{
Assuming that the divergence of $\veclambda$ is square-integrable on
$\Omega$ --- specifically, that $\|\vecnabla \cdot \veclambda \| \leq \alpha$
for some finite $\alpha \geq 0$ --- we have that
}
\begin{equation*}
\int_{\Omega}
\fnc{U}^{2}\left(\frac{\partial\lamxx}{\partial x}+
\frac{\partial\lamyy}{\partial y}\right)
\mr{d}\Omega \leq \alpha\|\fnc{U}\|^{2}.
\end{equation*}

The following inequality results from breaking up the surface integral in (\ref{eq:energy5}) into integrals over $\Gamma^{+}$ and $\Gamma^{-}$,
inserting the boundary condition, and making use of the above bound:
\begin{equation}\label{eq:energy6}
\begin{split}
\frac{\mr{d}\|\fnc{U}\|^{2}}{\mr{d}t}\leq&-\oint_{\Gamma^{+}}\fnc{U}^{2}|\nxx\lamxx+\nyy\lamyy|\mr{d}\Gamma+\oint_{\Gamma^{-}}\fnc{G}^{2}|\nxx\lamxx+\nyy\lamyy|\mr{d}\Gamma+\alpha\|\fnc{U}\|^{2}\\
\leq&\oint_{\Gamma^{-}}\fnc{G}^{2}|\nxx\lamxx+\nyy\lamyy|\mr{d}\Gamma+\alpha\|\fnc{U}\|^{2}.
\end{split}
\end{equation}
We integrate (\ref{eq:energy6}) in time and apply the initial condition
(see~\cite{Gustafsson2013} pg.\ 94), resulting in the estimate
\begin{equation*}
\begin{split}
\|\fnc{U}\|^{2}\leq\exp\left(\alpha t\right)\|\fnc{F}\|^{2}+\int_{0}^{t}\exp\left(\alpha(t-\tau)\right)\beta(\tau)\mr{d}\tau, \\
\textrm{ where }\qquad \beta \equiv \oint_{\Gamma^{-}}\fnc{G}^{2}|\nxx\lamxx+\nyy\lamyy|\mr{d}\Gamma.
\end{split}
\end{equation*}
Thus, we see that the problem defined by (\ref{eq:LCvar}) and
(\ref{eq:icbc}) is stable in the sense of Hadamard, that is, the solution
depends continuously on the data~\cite{Gustafsson2013}.

\begin{remark}
In the numerical experiments presented later, we consider the special case
of \eqref{eq:LCvar} where the velocity is divergence free,
$\vecnabla\cdot\veclambda =0$, and the boundary conditions are periodic.
Under these assumptions, starting from \eqref{eq:energy4}, the energy method results in
\begin{equation*}
\frac{\mr{d}\|\fnc{U}\|^{2}}{\mr{d}t}=0,
\end{equation*}
which shows that the energy is constant.
\end{remark}

In addition to stability, we are interested in constructing schemes that
are conservative. To understand the discrete conditions that will be imposed, it
is useful to delineate the conditions on the continuous problem.  The PDE
(\ref{eq:LCvar}) has an integral form representation given as
\begin{equation}\label{eq:con}
\frac{\mr{d}}{\mr{d}t}\int_{\Omt}\fnc{U}\mr{d}\Omega+\oint_{\Gamt}\fnc{U}\bm{n}\cdot \bm{\lambda}\mr{d}\Gamma = 0,
\end{equation}
where \eqref{eq:con} holds for all arbitrary subdomains $\Omt \subset \Omega$
with piecewise smooth, orientable boundaries $\Gamt$. Typically, the strong form
\eqref{eq:LCvar} is discretized using SBP-SAT schemes; nevertheless, we would
like our schemes to mimic \eqref{eq:con} discretely on arbitrary domains
composed of one or more elements.

%=====================================================================================
%
% The generic SBP-SAT semi-discretization
%
%=====================================================================================

\subsection{The generic SBP-SAT semi-discretization}\label{sec:coord}

In this section, we present a generic SBP-SAT semi-discretization of
\eqref{eq:LCvar}, and then determine the general conditions on the SATs necessary to
obtain an energy-stable, accurate, and conservative scheme.  We focus on SATs
for inter-element coupling --- weak enforcement of boundary conditions is similar.

The domain $\Omega$ is partitioned into $E$ nonoverlapping elements: $\Omega = \bigcup_{e = 1}^{E}\Omega_{e}$.  On each
element $\Omega_{e}$, the PDE \eqref{eq:LCvar} is mapped from physical coordinates to
computational, or reference, coordinates. For a time-invariant transformation, this results in the following skew-symmetric form:
\begin{equation}\label{eq:curvvarPDE}
\frac{\partial\fnc{J}\fnc{U}}{\partial t}
+\frac{1}{2}\frac{\partial\lamx\fnc{U}}{\partial \xi}
+\frac{1}{2}\frac{\partial\lamy\fnc{U}}{\partial \eta}
+\frac{\lamx}{2}\frac{\partial\fnc{U}}{\partial \xi}
+\frac{\lamy}{2}\frac{\partial\fnc{U}}{\partial \eta}
+\frac{\fnc{U}}{2}\left(\frac{\partial\lamx}{\partial \xi}+\frac{\partial\lamy}{\partial \eta}\right)=0,
\end{equation}
where
\begin{equation*}
\lamxi =\frac{\partial y}{\partial \eta} \lamxx- \frac{\partial x}{\partial \eta}\lamyy ,\quad
\lameta = -\frac{\partial y}{\partial \xi} \lamxx + \frac{\partial x}{\partial \xi} \lamyy,\quad
\fnc{J} = \frac{\partial x}{\partial \xi} \frac{\partial y}{\partial \eta} - \frac{\partial x}{\partial \eta}\frac{\partial y}{\partial \xi}.
\end{equation*}

To present and analyze the SATs, we consider the interface between two generic
elements, labeled ``left'' and ``right,'' having solutions $\uL$ and $\uR$,
respectively; see Figure \ref{fig:illustration}.  Suppose the left and right
elements have $\kappa_{L}$ and $\kappa_{R}$ faces, respectively.  Then, without
loss of generality, we can index the non-shared faces such that the $\ExL$ and
$\ExR$ decompositions can be written as (see Theorem~\ref{thm:SATdecomp})
\begin{equation*}
\ExL = \sum_{j=1}^{\kappa_{L}-1}\ExLi{j} + \nxL\RL\Tr\B_{L}\RL
\quad\text{and}\quad
\ExR = \sum_{j=1}^{\kappa_{R}-1}\ExRi{j} + \nxR\RR\Tr\B_{R}\RR,
\end{equation*}
where the terms $\nxL\RL\Tr\B_{L}\RL$ and $\nxR\RR\Tr\B_{R}\RR$ correspond to
the shared face.  Similar expressions hold for $\EyL$ and $\EyR$.  In the
following, we will focus on the shared face and will drop contributions from the
remaining $\kappa_{L}-1$ faces on the left and $\kappa_{R}-1$ faces on the
right, unless otherwise noted.

\begin{figure*}[tp]
 \begin{center}
 \includegraphics[width=\textwidth]{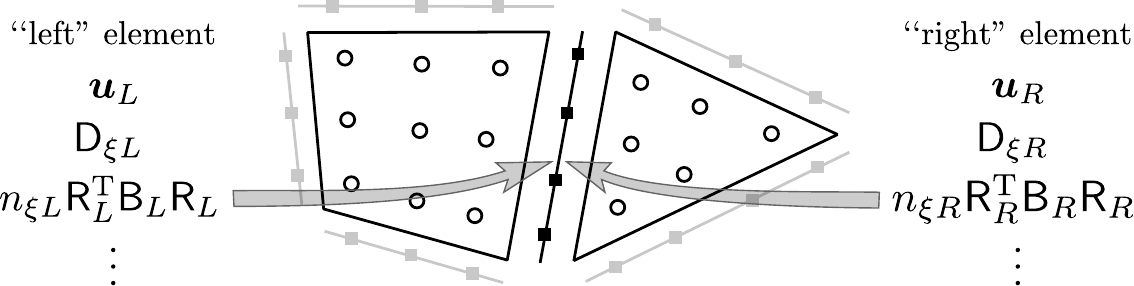}
 \caption[]{Illustration of two generic SBP elements and their common interface
   used for the analysis of SATs. The circles denote the volume nodes at which
   the solutions $\uL$ and $\uR$ are stored, and the black squares denote
   cubature nodes for the face; the latter are introduced in
   Section~\ref{sec:decomp}.\label{fig:illustration}}
 \end{center}
\end{figure*}

The SBP-SAT semi-discretization of \eqref{eq:curvvarPDE} is given by
\begin{multline}\label{eq:fullL}
\frac{\mr{d}}{\mr{d}t} \left( \JacL \uL \right)
+\frac{1}{2}\DxL\LamxL\uL
+\frac{1}{2}\DyL\LamyL\uL
+\frac{1}{2}\LamxL\DxL\uL
+\frac{1}{2}\LamyL\DyL\uL\\
+\frac{\UL}{2}\left(\DxL\LamxL\oneL+\DyL\LamyL\oneL\right)
= \underbrace{\frac{1}{2}\ML^{-1}\left(\MLL\uL-\MLR\uR\right)}_{\bm{\mr{SAT}}_{L}},
\end{multline}
on the left element and
\begin{multline}\label{eq:fullR}
\frac{\mr{d}}{\mr{d}t}\left( \JacR \uR \right)
+\frac{1}{2}\DxR\LamxR\uR
+\frac{1}{2}\DyR\LamyR\uR
+\frac{1}{2}\LamxR\DxR\uR
+\frac{1}{2}\LamyR\DyR\uR\\
+\frac{\UR}{2}\left(\DxR\LamxR\oneR+\DyR\LamyR\oneR\right)
= \underbrace{\frac{1}{2}\MR^{-1}\left(\MRR\uR-\MRL\uR\right)}_{\bm{\mr{SAT}}_{R}},
\end{multline}
on the right element.  These discretizations introduce several new matrices and vectors.
For instance,
\begin{align*}
\LamxL &= \diag\left(\lamx(\xi_{1},\eta_{1}),\dots,\lamx(\xi_{n_{L}},\eta_{n_{L}})\right),\\
\UL &= \diag\left(u_{L,1},\dots,u_{L,n_{L}}\right),\\
\mat{J}_{L} &= \diag\left(\fnc{J}(\xi_{1},\eta_{1}),\dots,\fnc{J}(\xi_{n_L},\eta_{n_L})\right),
\end{align*}
are diagonal matrices, where $n_{L}$ is the number of nodes in the left element.  Similar definitions
hold for $\LamyL$, $\LamxR$, $\LamyR$, $\UR$, and $\Jac_{R}$.  In addition, $\oneL \in \mathbb{R}^{n_L}$ and $\oneR \in \mathbb{R}^{n_R}$ are constant vectors with entries equal to one.

The matrices $\MLL \in \mathbb{R}^{n_L \times n_L}$, $\MLR \in \mathbb{R}^{n_L
  \times n_R}$, $\MRR \in \mathbb{R}^{n_R \times n_R}$, and $\MRL \in
\mathbb{R}^{n_R \times n_L}$ depend on the spatially varying field
$\veclambda_{\xi} = [\lambda_{\xi},\lambda_{\eta}]\Tr$, in general, and this
dependence is reflected in the notation. It is these four SAT matrices that we
seek to constrain using stability, accuracy, and conservation conditions.

%======================================================================================
%
% Stability
%
%======================================================================================
\subsubsection{Stability}

We begin by determining the condition imposed by stability on the SAT
matrices. The energy method applied to \eqref{eq:fullL} and \eqref{eq:fullR}
consists of multiplying the equations by $\uL\Tr\ML$ and $\uR\Tr\MR$,
respectively, and adding the resulting expressions.  After cancellation, and
using the fact that $\bm{u}\Tr\mat{H}\frac{\mr{d}\Jac
  \bm{u}}{\mr{d}t}=\frac{1}{2}\frac{\mr{d}\bm{u}\Tr\mat{H}\Jac\bm{u}}{\mr{d}t}$,
we find
\begin{multline}\label{eq:energyLR}
\frac{\mr{d}}{\mr{d}t}\left(\uL\Tr\ML\JacL\uL + \uR\Tr\MR\JacR\uR \right)= \\
-\uL\Tr\UL\ML\left(\DxL\LamxL\oneL+\DyL\LamyL\oneL\right)
 -\uR\Tr\UR\MR\left(\DxR\LamxR\oneR+\DyR\LamyR\oneR\right) \\
-\uL\Tr\RL\Tr\BL\RL\LamL\uL + \uL\Tr\MLL\uL - \uL\Tr\MLR\uR \\
-\uR\Tr\RR\Tr\BR\RR\LamR\uR + \uR\Tr\MRR\uR - \uR\Tr\MRL\uL,
\end{multline}
where
\begin{equation*}
\LamL = \nxL\LamxL + \nyL\LamyL\qquad\text{and}\qquad
\LamR = \nxR\LamxR + \nyR\LamyR.
\end{equation*}
As explained earlier, we have retained only those boundary matrices
corresponding to the common face shared by the left and right elements.  The
terms corresponding to the remaining faces have been dropped to simplify the
presentation.

We treat the first terms on the right, \ie the terms on the second line of
\eqref{eq:energyLR}, in much the same way as we did for the continuous analysis.
In particular, assuming the coordinate transformation is differentiable and has a bounded and nonzero Jacobian, we have
\begin{align*}
-\uL\Tr\UL\ML\left(\DxL\LamxL\oneL+\DyL\LamyL\oneL\right) &\leq \gamma_{L}\|\uL\|_{\tildeML}^{2},\\
-\uR\Tr\UR\MR\left(\DxR\LamxR\oneR+\DyR\LamyR\oneR\right) &\leq \gamma_{R}\|\uR\|_{\tildeMR}^{2}
\end{align*}
where
\begin{equation*}
\begin{split}
&\gamma_{L}=\max\limits_{i\in[1,n_{L}]}\left[\JacL^{-1}\left(\DxL\LamxL\oneL+\DyL\LamyL\oneL\right)\right]_{i},\\
&\gamma_{R}=\max\limits_{i\in[1,n_{R}]}\left[\JacR^{-1}\left(\DxR\LamxR\oneR+\DyR\LamyR\oneR\right)\right]_{i},
\end{split}
\end{equation*}
and we have defined new norms $\tildeML = \uL\Tr\ML\JacL\uL$, $\tildeMR=\uR\Tr\MR\JacR\uR$. Using these bounds in
\eqref{eq:energyLR}, and grouping the terms involving the SAT matrices, we obtain
\ignore{
\begin{align*}
-\uL\Tr\UL\ML\left(\DxL\LamxL\oneL+\DyL\LamyL\oneL\right) &\leq \gamma_{L}\|\uL\|_{\ML}^{2}\leq\tilde{\gamma}_{L}\|\uL\|_{\tildeML}^{2},\\
-\uR\Tr\UR\MR\left(\DxR\LamxR\oneR+\DyR\LamyR\oneR\right) &\leq \gamma_{R}\|\uR\|_{\MR}^{2}\leq\tilde{\gamma}_{R}\|\uR\|_{\tildeMR}^{2}
\end{align*}
\todo[inline]{David: do we need to make any assumption on the curvilinear coordinate transformation so that we can do the above?}\todo[inline]{Jason: it might not hurt to state somewhere that we are assuming the coordinate transformation is differentiable and has a bounded and nonzero Jacobian}\todo[inline]{Jason: do we need to define the $\max$ function for a vector?  Can we use the infinity norm instead?  I do not know.}
where $\gamma_{L}=\max\left(\DxL\LamxL\oneL+\DyL\LamyL\oneL\right)$,
$\gamma_{R}=\max\left(\DxR\LamxR\oneR+\DyR\LamyR\oneR\right)$, we have defined
new norms $\tildeML = \uL\Tr\ML\JacL\uL$, $\tildeMR=\uR\Tr\MR\JacR\uR$, and $\tilde{\gamma}_{L}$ and $\tilde{\gamma}_{R}$
are appropriately scaled constants such that the inequality holds.\todo[inline]{Can you do this?  Is there a paper we can cite that explains this?}  Using these bounds in
\eqref{eq:energyLR}, and grouping the terms involving the SAT matrices, we obtain
}
\begin{multline}\label{eq:energy_bound}
\frac{\mr{d}}{\mr{d}t} \left(\|\uL\|_{\tildeML}^{2} + \|\uR\|_{\tildeMR}^{2} \right)
\leq
C\left(\|\uL\|_{\tildeML}^{2}+\|\uR\|_{\tildeMR}^{2}\right)\\
-
\begin{bmatrix} \uL\Tr & \uR\Tr \end{bmatrix}
\begin{bmatrix}
\RL\Tr\BL\RL\LamL-\MLL & \MLR \\
\MRL & \RR\Tr\BR\RR\LamR-\MRR
\end{bmatrix}
\begin{bmatrix} \uL \\ \uR \end{bmatrix},
\end{multline}
for $C = \max(\gamma_{L},\gamma_{R})$.

In order to bound the solution in terms of the initial and boundary conditions, the matrix in the right-hand side must be must be 
negative semi-definite. This motivates the first condition on the SAT matrices. 
\ignore{
As it stands, the bound \eqref{eq:energy_bound} permits arbitrary nonphysical
growth since the SAT terms are not bounded.  To see this more clearly, consider the divergence-free case, in which the terms involving $\UL$ and $\UR$ are absent from the discrete equations \eqref{eq:fullL} and \eqref{eq:fullR}, respectively. In this case it is easy to show that the rate-of-change of discrete energy is \emph{equal} to the terms on the second line of \eqref{eq:energy_bound}.  This motivates the first condition on the SAT matrices.}

\begin{cond}[Stability]\label{eq:conds}
  The matrices $\MLL$, $\MLR$, $\MRR$, and $\MRL$ must be such that
  \begin{equation*}
    \begin{bmatrix}
      \RL\Tr\BL\RL\LamL-\MLL & \MLR \\
      \MRL & \RR\Tr\BR\RR\LamR-\MRR
    \end{bmatrix}
  \end{equation*}
  is positive semi-definite for all $\LamL$ and $\LamR$.
\end{cond}

%======================================================================================
%
% Accuracy
%
%======================================================================================
\subsubsection{Accuracy}
In order to maintain the accuracy of the base scheme, the SATs must add terms that are of
the order of the discretization. The required conditions are therefore given by

\begin{cond}[Accuracy]\label{eq:conda}
The matrices $\MLL$, $\MLR$, $\MRR$, and $\MRL$ must be such that
\begin{equation*}
\begin{aligned}
\ML^{-1}\left(\MLL\bm{v}_{L}-\MLR\bm{v}_{R}\right) &= \mathcal{O}(h^{\tilde{p}}),\\
\MR^{-1}\left(\MRR\bm{v}_{R}-\MRL\bm{v}_{L}\right) &= \mathcal{O}(h^{\tilde{p}}),
\end{aligned}
\end{equation*}
where $h$ is an appropriate measure for the linear dimension of the shared face,
and $\vL$ and $\vR$ are the projection, onto the nodes of the left and right
domains in physical space, of some continuous function $\fnc{V}(x,y)$, and $\tilde{p}\ge p$.
\end{cond}

We will have more to say about the above accuracy condition in the context of specific
examples of SATs in Section~\ref{sec:concrete}.

%======================================================================================
%
% Conservation
%
%======================================================================================
\subsubsection{Conservation}

In order to determine the constraints on the SAT matrices such that the scheme
is conservative, we multiply \eqref{eq:fullL} by $\oneL\Tr\ML$, and
\eqref{eq:fullR} by $\oneR\Tr\MR$ and sum the expressions; this operation is the discrete analogue of
integrating the PDE over the volume consisting of both elements, \ie,
$\tilde{\Omega}$ in \eqref{eq:con}. Simplifying the result we obtain
\begin{multline*}
  \frac{\mr{d}}{\mr{d}t}\left( \oneL\Tr\ML\JacL\uL +
\oneR\Tr\MR\JacR\uR\right)=\\
\frac{1}{2}\left[
-\oneL\Tr\RL\BL\RL\LamL\uL-\oneL\Tr\LamL\RL\Tr\BL\RL
+\oneL\Tr\MLL-\oneR\Tr\MRL\right]\uL,\\
+\frac{1}{2}\left[
-\oneR\Tr\RR\Tr\BR\RR\LamR\uR-\oneR\Tr\LamR\RR\Tr\BR\RR
+\oneR\Tr\MRR-\oneL\Tr\MLR\right]\uR.
\end{multline*}
For conservation, we want the right-hand side to vanish for arbitrary $\uL$ and
$\uR$.  Thus, after rearranging the right-hand side, we arrive at the third, and final, condition on the SAT matrices.
\begin{cond}[Conservation]\label{eq:condc}
  The matrices $\MLL$, $\MLR$, $\MRR$, and $\MRL$ must be such that
  \begin{multline*}
    \left[\oneL\Tr\left(\MLL - \RL\Tr\BL\RL\LamL\right) - \oneR\Tr\MRL\right] \uL \\
    - \left[\oneR\Tr\left(-\MRR + \RR\Tr\BR\RR\LamR\right) + \oneL\Tr\MLR\right]\uR \\
    =\left(\RL\LamL\oneL\right)\Tr\BL\RL\uL+\left(\RR\LamR\oneR\right)\Tr\BR\RR\uR
  \end{multline*}
  for all $\uL$, $\uR$, $\LamL$, $\LamR$.
\end{cond}

In the above condition, we have left the conservation conditions coupled and
dependent on the solution as this is the typical situation for numerical fluxes
used for nonlinear problems.  We elaborate further on the conservation condition
below.

%======================================================================================
%
% The divergence-free problem
%
%======================================================================================
\subsection{Divergence-free advection field}

We conclude Section~\ref{sec:var} by investigating the special case of a
divergence-free advection field in the variable-coefficient problem
\eqref{eq:LCvar}, \ie, $\nabla \cdot \bm{\lambda} = 0$. This case is of practical
interest, because it arises in the incompressible Navier-Stokes and Maxwell's
equations.  We also use this case to verify the theory for scalar
variable-coefficient equations in Section~\ref{sec:results}.

In the divergence-free case, the skew-symmetric form of the PDE \eqref{eq:LCvar} simplifies to
\begin{equation}\label{eq:divfree}
\frac{\partial\fnc{U}}{\partial t} =
-\frac{1}{2}\vecnabla \cdot\left(\veclambda \fnc{U}\right)
-\frac{1}{2}\veclambda \cdot\nabla \fnc{U},
\end{equation}
and the SBP-SAT semi-discretization of \eqref{eq:divfree} on the left element becomes
\begin{multline}\label{eq:discdivfree}
\frac{\mr{d}}{\mr{d}t} \left( \JacL \uL \right)
+\frac{1}{2}\DxL\LamxL\uL
+\frac{1}{2}\DyL\LamyL\uL
+\frac{1}{2}\LamxL\DxL\uL
+\frac{1}{2}\LamyL\DyL\uL\\
= \frac{1}{2}\ML^{-1}\left(\MLL\uL-\MLR\uR\right).
\end{multline}
The semi-discretization on the right element is similar.

It is straightforward to show that the stability and accuracy conditions remain
the same in the divergence-free case.  However, the conservation condition is
modified:

\begin{customcond}{3$'$}[Divergence-Free Conservation]\label{eq:condc_divfree}
  For the SBP-SAT semi-discretization of \eqref{eq:divfree}, the matrices
  $\MLL$, $\MLR$, $\MRR$, and $\MRL$ must be such that
  \begin{multline*}
    \left[\oneL\Tr\left(\MLL - \RL\Tr\BL\RL\LamL\right) - \oneR\Tr\MRL\right] \uL \\
    - \left[\oneR\Tr\left(-\MRR + \RR\Tr\BR\RR\LamR\right) + \oneL\Tr\MLR\right]\uR = \\
    \oneL\Tr\left(\LamxL \QxL + \LamyL\QyL\right)\uL
    + \oneR\Tr\left(\LamxR \QxR + \LamyL\QyR\right)\uR
  \end{multline*}
  for all $\uL$, $\uR$, $\LamxL$, $\LamyL$, $\LamxR$, and $\LamyR$.
\end{customcond}

Note that the right-hand side of the divergence-free conservation condition involves $\Qx$ and $\Qy$.
\ignore{
\todo[inline]{David: what if we do the following}
and the SBP-SAT semi-discretization of \eqref{eq:divfree} on the $e$th element becomes
\begin{multline*}
\frac{\mr{d}}{\mr{d}t} \left( \Jace^{-1} \ue \right)
+\frac{1}{2}\Dxie\Lamxie\ue
+\frac{1}{2}\Detae\Lametae\ue
+\frac{1}{2}\Lamxie\Dxie\ue
+\frac{1}{2}\Lametae\Detae\ue\\
= \frac{1}{2}\Me^{-1}\sum\limits_{i=1}^{\kappa_{e}}\left(\Meei\ue-\Mei\ui\right).
\end{multline*}

It is straightforward to show that the stability and accuracy conditions remain
the same in the divergence-free case.  However, the conservation condition is
modified:

\begin{customcond}{3$'$}[Divergence-Free Conservation]\label{eq:condc_divfree}
  For the SBP-SAT semi-discretization of \eqref{eq:divfree}, the matrices
  $\Meei$ and $\Mei$ must be such that
  \begin{multline*}
\sum\limits_{e=1}^{E}\sum\limits_{i=1}^{\kappa_{e}}
\onee\Tr\left(\Meei-\Rei\Tr\Be\Rei\Lame-\Lame\Rei\Tr\Be\Rei\right)\ue
-\onee\Tr\Mei\ui
+\onee\Tr\left(\Lamxie\Qxie+\Lametae\Qetae\right)\ue = 0
  \end{multline*}
  for all $\ue$ and $\Lame$.
\end{customcond}
}
%======================================================================================
%
% Symmetric and upwind SATs
%
%======================================================================================
\section{Concrete examples of SATs: symmetric and upwind SATs}\label{sec:concrete}

The SATs presented in Section~\ref{sec:var} offer significant generality, but
they are also somewhat abstract.  In this section, we present two concrete
examples of SATs --- symmetric and upwind --- for multidimensional SBP
discretizations, and we show that these SATs satisfy Conditions 1--3 for
stability, conservation, and accuracy.

The proposed symmetric and upwind SATs require the following assumption that
constrains the face-cubature rules and coordinate transformations of adjacent
elements.

\begin{assume}\label{assume:coord}
  The cubature rule of the face shared by adjacent elements has the same number
  of nodes, $\nu$, in both reference spaces.  In addition, the coordinate
  transformations in the adjacent elements continuously map their respective
  face-cubature nodes to the same locations in physical space.  For example, in
  the case of the left and right elements we have
  \begin{equation*}
    x\left(\xi_{L,i},\eta_{L,i}\right) = x\left(\xi_{R,i},\eta_{R,i}\right)
    \quad\text{and}\quad
    y\left(\xi_{L,i},\eta_{L,i}\right) = y\left(\xi_{R,i},\eta_{R,i}\right),
    \qquad\forall\; i=1,2,\ldots,\nu,
  \end{equation*}
  where $\left(\xi_{L,i},\eta_{L,i}\right)$ and
  $\left(\xi_{R,i},\eta_{R,i}\right)$ denote the $i$th face-cubature points on
  the left and right elements, respectively. Furthermore, the scaled face-normal
  vectors, based on the coordinate transformations along the shared face, are
  equal and opposite at the cubature nodes:
  \begin{equation}\label{eq:normal}
    b_{L,i} \left[\fnc{J} \left( n_{\xi} \nabla \xi + n_{\eta} \nabla \eta \right)\right]_{L,i} = -b_{R,i} \left[\fnc{J}\left( n_{\xi} \nabla \xi  + n_{\eta} \nabla \eta \right) \right]_{R,i},
    \qquad\forall\; i=1,2,\ldots,\nu,
  \end{equation}
  where $b_{L,i}$ and $b_{R,i}$ denote the $i$th face-cubature weights on the
  left and right elements, respectively.
\end{assume}

\begin{remark}
In principle, the cubature rules for the shared face could use a different number of nodes on the left and right elements, but this case is beyond the scope of the current work.
\end{remark}

\begin{remark}
  Equation~\eqref{eq:normal} is satisfied by isoparametric and subparametric
  coordinate transformations that use the same (possibly scaled) cubature rule
  on each face.
\end{remark}

Let $\lambda_{n} = \lambda_{\xi} n_{\xi} + \lambda_{\eta} n_{\eta}$ be the advection velocity normal to the shared face.  Then, assuming that $\lambda_{x}$ and $\lambda_{y}$ are continuous, one can use \eqref{eq:normal} and the definitions of $\lamxi$ and $\lameta$ to show that
\begin{equation}\label{eq:lamLlamR}
  b_{L,i} \left(\lambda_{n}\right)_{L,i}
  = -b_{R,i} \left(\lambda_{n}\right)_{R,i},
  \qquad\forall\; i=1,2,\ldots,\nu.
\end{equation}
In other words, the cubature-scaled advection velocity normal to the face is
equal in magnitude and opposite in direction at the coincident nodes along the
face.  We use \eqref{eq:lamLlamR} to define the diagonal $\nu\times\nu$ matrix
\begin{equation}\label{eq:defBlam}
\Blam = \BL \mat{\Lambda}_{\hat{\Gamma},L} = \BR \mat{\Lambda}_{\hat{\Gamma},R}
\end{equation}
where
\begin{equation*}
  \mat{\Lambda}_{\hat{\Gamma},L} =
  \mydiag\left[\left(\lambda_{n}\right)_{L,1}, \ldots,
  \left(\lambda_{n}\right)_{L,\nu}\right],\qquad
  \mat{\Lambda}_{\hat{\Gamma},R} =
  -\mydiag\left[\left(\lambda_{n}\right)_{R,1}, \ldots,
  \left(\lambda_{n}\right)_{R,\nu}\right].
\end{equation*}
$\Blam$ will play a central role in defining the symmetric and upwind SATs below.

\begin{remark}
  Using \eqref{eq:lamLlamR} to define $\Blam$ simplifies the proof of accuracy
  for the symmetric and upwind SATs, but it is important to emphasize that
  neither \eqref{eq:lamLlamR} nor \eqref{eq:normal} are necessary for stability,
  accuracy, or conservation.  Conditions 1--3 will still hold as long as $\Blam$
  agrees with \eqref{eq:lamLlamR} on the order of the discretization and
  satisfies \eqref{eq:condcsym} below.  This is important, because nonlinear
  problems will not satisfy \eqref{eq:lamLlamR} due to jumps in $\lambda_{x}$
  and $\lambda_{y}$ across elements.
\end{remark}

\ignore{
Both the symmetric and upwind SATs rely on the following $\nu\times \nu$
diagonal matrix defined at the common face nodes:
\begin{equation}\label{eq:defBlam}
\Blam = \frac{1}{2}\left(\BL \mat{\Lambda}_{\hat{\Gamma},L}
+ \BR \mat{\Lambda}_{\hat{\Gamma},R}\right),
\end{equation}
where
\begin{equation*}
  \mat{\Lambda}_{\hat{\Gamma},L} =
  \mydiag\left[\left(\lambda_{n}\right)_{L,1}, \ldots,
  \left(\lambda_{n}\right)_{L,\nu}\right],\qquad
  \mat{\Lambda}_{\hat{\Gamma},R} =
  -\mydiag\left[\left(\lambda_{n}\right)_{R,1}, \ldots,
  \left(\lambda_{n}\right)_{R,\nu}\right].
\end{equation*}
and $\lambda_{n} = \lambda_{\xi} n_{\xi} + \lambda_{\eta} n_{\eta}$.

\begin{remark}
  The definition of $\Blam$ is not unique.  Other choices include $\Blam = \BL
  \mat{\Lambda}_{\hat{\Gamma},L}$ and $\Blam = \BR
  \mat{\Lambda}_{\hat{\Gamma},R}$.  Note, that for isoparametric and
  subparametric coordinate transformations, we have
  \begin{equation*}
    \BL\mat{\Lambda}_{\hat{\Gamma},L} = \BR\mat{\Lambda}_{\hat{\Gamma},R},
  \end{equation*}
  because the scaled face-normal vector is continuous across the interface for
  such transformations (since $\lambda_{x}$ and $\lambda_{y}$ are continuous).
  However, this continuity is not required here, which is important for the
  generalization to nonlinear PDEs.
\end{remark}
}

\subsection{Symmetric SATs}

Symmetric SATs are constructed by defining
\begin{equation}\label{eq:symM}
\begin{alignedat}{2}
\MLL &= \RL\Tr\BL\RL\LamL,\qquad & \MLR &= \RL\Tr\Blam\RR, \\
\MRR &= \RR\Tr\BR\RR\LamR,\qquad & \MRL &= -\RR\Tr\Blam\RL.
\end{alignedat}
\end{equation}
Based on these matrices, symmetric SATs for \eqref{eq:fullL} and
\eqref{eq:fullR} are given by
\begin{equation}\label{eq:SATsym}
\begin{aligned}
2\ML\bm{\mr{SAT}}_{L,\mr{sym}} &= \RL\Tr\BL\RL\LamL\uL-\RL\Tr\Blam\RR\uR,\\
2\MR\bm{\mr{SAT}}_{R,\mr{sym}} &= \RR\Tr\BR\RR\LamR\uR+\RR\Tr\Blam\RL\uL,
\end{aligned}
\end{equation}

\begin{thrm}
  The symmetric SATs \eqref{eq:SATsym} satisfy the stability and accuracy
  Conditions \ref{eq:conds} and \ref{eq:conda}.  In addition, they satisfy the
  conservation Condition \ref{eq:condc} provided
  \begin{equation}\label{eq:condcsym}
    \oneG\Tr\Blam\left(\RL\uL-\RR\uR\right) = \left(\RL\LamL\oneL\right)\Tr\BL\RL\uL+\left(\RR\LamR\oneR\right)\Tr\BR\RR\uR
  \end{equation}
  for all $\uL$, $\uR$, $\LamL$, $\LamR$, where $\oneG$ is a vector of ones of
  length $\nu$.
\end{thrm}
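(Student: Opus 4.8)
The plan is to verify Conditions~\ref{eq:conds}, \ref{eq:conda}, and~\ref{eq:condc} in turn by substituting the symmetric definitions \eqref{eq:symM}; the recurring simplification is that the symmetric choice makes the ``diagonal-block'' combinations $\RL\Tr\BL\RL\LamL - \MLL$ and $\RR\Tr\BR\RR\LamR - \MRR$ vanish identically, for all $\LamL$ and $\LamR$.

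For stability I would substitute \eqref{eq:symM} into the matrix of Condition~\ref{eq:conds}; its two diagonal blocks then vanish, leaving
\begin{equation*}
\begin{bmatrix} \mat{0} & \RL\Tr\Blam\RR \\ -\RR\Tr\Blam\RL & \mat{0} \end{bmatrix}.
\end{equation*}
Since $\Blam$ is diagonal, this matrix equals the negative of its own transpose, hence is skew-symmetric, so its quadratic form vanishes on every real pair $(\uL,\uR)$ and the matrix is (trivially) positive semi-definite. This yields Condition~\ref{eq:conds} and also makes precise the sense in which the symmetric SATs are non-dissipative.

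For conservation I would substitute \eqref{eq:symM} into Condition~\ref{eq:condc}; the terms $\oneL\Tr(\MLL - \RL\Tr\BL\RL\LamL)$ and $\oneR\Tr(-\MRR + \RR\Tr\BR\RR\LamR)$ drop, so the left-hand side reduces to $-\oneR\Tr\MRL\uL - \oneL\Tr\MLR\uR = (\RR\oneR)\Tr\Blam\RL\uL - (\RL\oneL)\Tr\Blam\RR\uR$. Because $\RL$ and $\RR$ are exact for polynomials of degree $r\ge p$ they reproduce constants, i.e.\ $\RL\oneL = \RR\oneR = \oneG$; the left-hand side therefore equals $\oneG\Tr\Blam(\RL\uL - \RR\uR)$, and requiring this to agree with the right-hand side of Condition~\ref{eq:condc} is exactly the stated proviso \eqref{eq:condcsym}.

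For accuracy I would replace $\uL,\uR$ by the nodal restrictions $\vL,\vR$ of a continuous $\fncV$ and, using $\Blam = \BL\mat{\Lambda}_{\hat{\Gamma},L}$ from \eqref{eq:defBlam}, factor
\begin{equation*}
\MLL\vL - \MLR\vR = \RL\Tr\BL\left(\RL\LamL\vL - \mat{\Lambda}_{\hat{\Gamma},L}\RR\vR\right).
\end{equation*}
Here $\RL\LamL\vL$ is the degree-$r$ extrapolation to the shared-face cubature nodes of the grid function $\lambda_{n}\fncV$ sampled at the left volume nodes, whereas $\mat{\Lambda}_{\hat{\Gamma},L}\RR\vR$ is the exact face-normal velocity (cf.\ \eqref{eq:lamLlamR}) times the degree-$r$ extrapolation of $\fncV$ from the right element; by Assumption~\ref{assume:coord} the two families of face-cubature nodes coincide in physical space and carry the same value of the single-valued $\fncV$, so the two bracketed vectors agree at each face node up to the $\mathcal{O}(h^{r+1})$ interpolation/extrapolation errors of $\RL,\RR$ (for smooth $\fncV$, $\lambda_{x}$, $\lambda_{y}$, and coordinate transformations). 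Pushing this through the remaining mesh-dependent factors $\ML^{-1}$, $\RL\Tr$, and $\BL$ then gives $\ML^{-1}(\MLL\vL-\MLR\vR) = \mathcal{O}(h^{\tilde{p}})$ with $\tilde{p}\ge p$, and the right-element bound follows identically. I expect the substitution-and-cancellation steps for stability and conservation to be essentially mechanical; the one genuinely delicate point is this accuracy estimate, where the mesh-dependent scalings of $\ML$, $\BL$, and the metric matrices must be tracked carefully, and the continuity of $\fncV$, $\lambda_{x}$, $\lambda_{y}$ together with Assumption~\ref{assume:coord} invoked, to confirm that the SATs preserve the design order of the base scheme.
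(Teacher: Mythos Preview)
Your stability and conservation arguments coincide with the paper's: both substitute \eqref{eq:symM} directly, observe that the diagonal blocks of the stability matrix vanish leaving a skew-symmetric remainder, and reduce Condition~\ref{eq:condc} to \eqref{eq:condcsym} via $\RL\oneL=\RR\oneR=\oneG$.

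Your accuracy argument, however, takes a different route. The paper shows that the symmetric SAT vanishes \emph{identically} whenever the face-normal flux $\lambda_{n}\fnc{U}$ is a polynomial of total degree at most $p$: setting $\LamL\uL=\pkL$ and choosing $\uR$ so that $\mat{\Lambda}_{\hat{\Gamma},L}\RR\uR=\pkgammaL$, the residual $\RL\Tr\BL(\RL\pkL-\pkgammaL)$ is zero by the exactness of $\RL$. This polynomial-exactness statement is the paper's entire accuracy proof; the passage to an $\mathcal{O}(h^{\tilde p})$ estimate is left implicit. You instead take smooth data $\fnc{V}$, factor the SAT as $\RL\Tr\BL(\RL\LamL\vL-\mat{\Lambda}_{\hat{\Gamma},L}\RR\vR)$, and bound the bracket by the interpolation/extrapolation error of $\RL$ and $\RR$ at coincident face nodes. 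Your approach is arguably closer to what Condition~\ref{eq:conda} literally asks for, and makes the role of Assumption~\ref{assume:coord} and the continuity of $\fnc{V}$, $\lambda_x$, $\lambda_y$ explicit; the paper's approach is cleaner because it sidesteps the mesh-scaling bookkeeping you flag as delicate, at the cost of leaving the $h$-rate implicit. Both are acceptable, and indeed your factorisation is the same as the paper's---the difference is only in whether the bracket is killed exactly (polynomial input) or bounded (smooth input).
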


\begin{proof}
  It is easy to see that the symmetric SAT matrices \eqref{eq:symM} lead to a
  skew-symmetric matrix in Condition \ref{eq:conds}, which implies that the SATs
  \eqref{eq:SATsym} are stable.

  To prove that the symmetric SATs satisfy the accuracy condition, we show that
  they vanish for all polynomial face-normal fluxes, $\left(\lambda_{\xi}n_{\xi}
  + \lambda_{\eta} n_{\eta}\right)\fnc{U}$, of total degree $p$ or less on the
  reference domain.  We do this for the left-element SAT only, since the proof
  is analogous for the right-element SAT.

  Let $\LamL\uL \equiv \pkL$ be the face-normal polynomial flux evaluated at the SBP nodes of the left element (in reference space) 
--- where we consider all $\LamL$ and $\uL$ that satisfy this definition --- and let $\pkgammaL$ be the same polynomial evaluated at the face-cubature points on the left element.
  Then, we consider those states on the right element such that
  \begin{equation*}
    \mat{\Lambda}_{\hat{\Gamma},L} (\RR\uR) = \pkgammaL.
  \end{equation*}
  Note that such states $\uR$ exist provided $\mat{\Lambda}_{\hat{\Gamma},L} \RR$ is full rank.
The vector $\pkgammaL$ defines the ``boundary'' flux for which we must show the left SAT vanishes.  We have
  \begin{align*}
    \RL\Tr\BL\RL \LamL\uL - \RL\Tr\Blam\RR\uR
    &= \RL\Tr\BL\RL \LamL\uL - \RL\Tr\BL\mat{\Lambda}_{\hat{\Gamma},L}\RR\uR \\
    &= \RL\Tr\BL \left(\RL \pkL - \pkgammaL\right) = 0,
    \qquad \forall\; k \in \{ 1,2,\ldots,\nmin{p} \},
  \end{align*}
  where we have used $\Blam = \BL\mat{\Lambda}_{\hat{\Gamma},L}$ and the fact that $\RL$ is exact for polynomials of degree $r$ or less, where $r\geq p$.  Thus, the SAT is zero for all polynomial face-normal fluxes of total degree $p$ or less.

  Finally, we substitute the $\mat{M}$ matrices into the equation in
  Condition~\ref{eq:condc} and find
  \begin{equation*}
    \oneG\Tr\Blam\left(\RL\uL-\RR\uR\right) = \left(\RL\LamL\oneL\right)\Tr\BL\RL\uL+\left(\RR\LamR\oneR\right)\Tr\BR\RR\uR,
  \end{equation*}
  which is precisely \eqref{eq:condcsym}.  Therefore, if this constraint is
  satisfied, the symmetric SATs are conservative.\qed
\end{proof}

Equation~\eqref{eq:condcsym} can be viewed as a constraint on the variable
coefficient matrices $\LamL$, $\LamR$, and $\Blam$.  There are a few ways this
constraint can be satisfied:
\begin{itemize}
\item For scalar variable-coefficient advection, we can apply a preprocessing
  step to enforce discrete continuity of the face-normal velocities, that is
  $\BL\RL\LamL\oneL = -\BR\RR\LamR\oneR = \Blam\oneG$; we use a similar
  preprocessing step for the divergence-free variable-coefficient advection case
  presented in the results.

\item For nonlinear systems of PDEs, such as the Euler equations of gas
  dynamics, the variable coefficients are functions of the solution and the
  coordinate transformation.  In this case, it is more convenient to consider
  pointwise conditions on a numerical flux Jacobian.  To illustrate, in the
  scalar case we would have
  \begin{equation*}
    \bar{\lambda}_{i}\left[\left(\RL\uL\right)_{i} - \left(\RR\uR\right)_{i}\right]
    = F\left[\left(\RL\uL\right)_{i},\bm{n}_{i,L}\right]
      + F\left[\left(\RR\uR\right)_{i},\bm{n}_{i,R}\right],
    \quad\forall\; i=1,2,\dots,\nu,
  \end{equation*}
  where $F[u,\bm{n}]$ is the nonlinear flux in the direction $\bm{n}$, and
  $\bar{\lambda}_{i}$ is the numerical flux Jacobian at the $i$th common face
  node.  Note that the pointwise conditions define a secant-like equation for
  the numerical flux Jacobian, which is a common condition for numerical fluxes
  used in nonlinear hyperbolic systems.

\end{itemize}

\subsection{Upwind SATs}\label{sec:upwind}

To construct upwind SATs, we define
\begin{equation}\label{eq:upwindM}
\begin{alignedat}{2}
\MLL &=\RL\Tr\BL\RL\LamL - \RL\Tr|\Blam|\RL,\qquad&
\MLR &= \RL\Tr\left(\Blam-|\Blam|\right)\RR,\\
\MRR &= \RR\Tr\BR\RR\LamR - \RR\Tr|\Blam|\RR, \qquad&
\MRL &= -\RR\Tr\left(\Blam + |\Blam|\right)\RL.
\end{alignedat}
\end{equation}
Therefore, upwind SATs for \eqref{eq:fullL} and \eqref{eq:fullR} are given by
\begin{equation}\label{eq:SATupwind}
\begin{aligned}
2\ML\bm{\mr{SAT}}_{L,\mr{upwd}} &= \left(\RL\Tr\BL\RL\LamL - \RL\Tr|\Blam|\RL\right)\uL
 - \RL\Tr\left(\Blam-|\Blam|\right)\RR\uR,\\
2\MR\bm{\mr{SAT}}_{R,\mr{upwd}} &= \left(\RR\Tr\BR\RR\LamR - \RR\Tr|\Blam|\RR\right)\uR
+ \RR\Tr\left(\Blam + |\Blam|\right)\RL\uL.
\end{aligned}
\end{equation}

\ignore{
\todo[inline]{Jason: I deleted this, because a penalty is a difference between the state and the desired state, not these terms}
We can see that the terms $\RL\Tr\left(\Blam-|\Blam|\right)\RR\uR$ and
$\RR\Tr\left(\Blam + |\Blam|\right)\RL\uL$ act as upwind penalty terms. The
remaining terms introduce small errors proportional to the difference in the
projection of the solution multiplied by the variable coefficient and the
projection of the solution multiplied by the variable coefficient at the
auxiliary nodes.
}

\begin{thrm}
The upwind SATs \eqref{eq:SATupwind} satisfy the stability and accuracy
  Conditions \ref{eq:conds} and \ref{eq:conda}.  In addition, they satisfy the
  conservation Condition \ref{eq:condc} provided
  \begin{equation}\tag{\ref{eq:condcsym}}
    \oneG\Tr\Blam\left(\RL\uL-\RR\uR\right) = \left(\RL\LamL\oneL\right)\Tr\BL\RL\uL+\left(\RR\LamR\oneR\right)\Tr\BR\RR\uR
  \end{equation}
  for all $\uL$, $\uR$, $\LamL$, $\LamR$, where $\oneG$ is a vector of ones of
  length $\nu$.
\end{thrm}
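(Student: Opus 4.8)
The plan is to reuse the proof of the preceding (symmetric) theorem almost verbatim, since the upwind matrices \eqref{eq:upwindM} differ from the symmetric ones \eqref{eq:symM} only by the addition of ``dissipative'' blocks built from $|\Blam|$. For stability I would substitute \eqref{eq:upwindM} into the matrix of Condition~\ref{eq:conds}; using $\RL\Tr\BL\RL\LamL-\MLL=\RL\Tr|\Blam|\RL$ and $\RR\Tr\BR\RR\LamR-\MRR=\RR\Tr|\Blam|\RR$, it reduces to
\begin{equation*}
  \begin{bmatrix}
    \RL\Tr|\Blam|\RL & \RL\Tr\left(\Blam-|\Blam|\right)\RR \\
    -\RR\Tr\left(\Blam+|\Blam|\right)\RL & \RR\Tr|\Blam|\RR
  \end{bmatrix}.
\end{equation*}
This block matrix is not symmetric, so I would evaluate the quadratic form directly: with $\bm{a}=\RL\uL$, $\bm{b}=\RR\uR$, and using that $\Blam$ and $|\Blam|$ are diagonal, the cross terms combine to $-2\,\bm{a}\Tr|\Blam|\bm{b}$ and the form collapses to $\left(\bm{a}-\bm{b}\right)\Tr|\Blam|\left(\bm{a}-\bm{b}\right)\ge 0$, since $|\Blam|$ has nonnegative entries. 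This proves Condition~\ref{eq:conds} and simultaneously exhibits the upwind dissipation as the squared interface jump $\RL\uL-\RR\uR$ weighted by $|\Blam|$.

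For accuracy, I would write the upwind SAT as the symmetric SAT plus a remainder; on the left element,
\begin{equation*}
  2\ML\,\bm{\mr{SAT}}_{L,\mr{upwd}} = 2\ML\,\bm{\mr{SAT}}_{L,\mr{sym}} - \RL\Tr|\Blam|\left(\RL\uL-\RR\uR\right),
\end{equation*}
and analogously on the right. The symmetric part was already shown (in the previous theorem) to vanish for polynomial face-normal fluxes of total degree $p$ or less. For the remainder, Assumption~\ref{assume:coord} forces the left and right face-cubature nodes to coincide in physical space, so when $\uL=\vL$ and $\uR=\vR$ are nodal projections of a common continuous function $\fncV$ the vectors $\RL\vL$ and $\RR\vR$ agree to the interpolation order $r\ge p$ (and exactly for polynomials of degree at most $r$); with $|\Blam|$ bounded and the usual $\ML^{-1}$ norm scaling this yields $\mathcal{O}(h^{\tilde{p}})$ with $\tilde{p}\ge p$, establishing Condition~\ref{eq:conda}.

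For conservation, I would substitute \eqref{eq:upwindM} into Condition~\ref{eq:condc}. The key simplification is $\RL\oneL=\RR\oneR=\oneG$, because the interpolation/extrapolation operators reproduce constants exactly; the $|\Blam|$ contributions then telescope, $\oneG\Tr\left(\Blam+|\Blam|\right)-\oneG\Tr|\Blam|=\oneG\Tr\Blam$ in the left block and $\oneG\Tr|\Blam|+\oneG\Tr\left(\Blam-|\Blam|\right)=\oneG\Tr\Blam$ in the right block, so the left-hand side of Condition~\ref{eq:condc} collapses to $\oneG\Tr\Blam\left(\RL\uL-\RR\uR\right)$ and the condition becomes precisely \eqref{eq:condcsym}. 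I expect the only real difficulty to be bookkeeping: tracking signs and transposes through these cancellations --- in particular the $(2,1)$ block $-\RR\Tr\left(\Blam+|\Blam|\right)\RL$ of the stability matrix, which (unlike the symmetric case) is not the negative transpose of the $(1,2)$ block --- but no idea beyond the symmetric proof is needed, the extra $|\Blam|$ terms being engineered precisely so that they telescope away in conservation while supplying nonnegative dissipation in the energy estimate.
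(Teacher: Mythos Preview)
Your proposal is correct and follows essentially the same route as the paper: the stability argument reduces to the same quadratic form $(\RL\uL-\RR\uR)\Tr|\Blam|(\RL\uL-\RR\uR)\ge 0$ (the paper packages this as the $2\times 2$ block matrix in $|\Blam|$ having nonnegative eigenvalues), and the conservation argument is identical, with the $|\Blam|$ contributions telescoping via $\RL\oneL=\RR\oneR=\oneG$ to recover \eqref{eq:condcsym}. Your accuracy argument --- splitting upwind into symmetric plus the dissipation term $\RL\Tr|\Blam|(\RL\uL-\RR\uR)$ and bounding the latter via Assumption~\ref{assume:coord} --- is slightly more explicit than the paper, which simply states the proof is ``similar to the accuracy proof of the symmetric SATs'' and omits it.
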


\begin{proof}
  The matrix in Condition \eqref{eq:conds} is positive semi-definite using the
  upwind SAT matrices \eqref{eq:upwindM} if
\begin{equation*}
  \begin{bmatrix} (\RL\uL)\Tr & (\RR\uR)\Tr \end{bmatrix}
  \begin{bmatrix} \phantom{-}|\Blam| & -|\Blam| \\
    -|\Blam| & \phantom{-}|\Blam| \end{bmatrix}
 \begin{bmatrix} \RL\uL \\ \RR\uR \end{bmatrix} \geq 0,
\end{equation*}
for all nonzeros $\uL$ and $\uR$.  This is satisfied, because the matrix
$\left[\begin{smallmatrix} \phantom{-}|\Blam| & -|\Blam| \\ -|\Blam| &
    \phantom{-}|\Blam| \end{smallmatrix} \right]$ has non-negative eigenvalues.

The proof that the upwind SATs satisfy the accuracy Condition \ref{eq:conda} is
similar to the accuracy proof of the symmetric SATs, so we omit it for
brevity.

Substituting the upwind SAT matrices \eqref{eq:upwindM} into the conservation
condition, Condition~\ref{eq:condc}, we obtain the same constraint on the
variable coefficients as for the symmetric SATs,
namely~\eqref{eq:condcsym}.\qed
\end{proof}

\subsection{Divergence-free advection field with upwind SATs}\label{sec:divfree}

We consider the use of the upwind SATs in the SBP-SAT discretization of the
divergence-free variable-coefficient problem, \eqref{eq:divfree}, because this is the
PDE and the SATs we employ in the results presented below.  As remarked
previously, the divergence-free case does not alter the stability or accuracy of
the discretization.  Thus, we need only address the conservation condition.

With upwind SATs, the equation in Condition~\ref{eq:condc_divfree} reduces to
\begin{equation}\label{eq:condc_upwind}
  \oneG\Tr\Blam \left( \RL \uL - \RR\uR \right)
  =
  \oneL\Tr\left(\LamxL \QxL + \LamyL\QyL\right)\uL
  + \oneR\Tr\left(\LamxR \QxR + \LamyL\QyR\right)\uR,
\end{equation}
where we have made use of $\RL\oneL = \RR\oneR = \oneG$.  Unlike the
non-divergence-free situation, the conservation condition
\eqref{eq:condc_upwind} is no longer local to the common face.

One way to satisfy conservation in this case is to define the discrete
divergence-free condition in such a way that \eqref{eq:condc_upwind} is
satisfied.  In particular, we require that $\Lamx$, $\Lamy$ and $\B_{\lambda,j}$
satisfy
\begin{equation}\label{eq:divfree_h}
  \left(\Dx \Lamx + \Dy \Lamy\right)\bm{1} = \M^{-1} \sum_{j=1}^{\kappa} \left(\R_{j}\Tr\B_{j} \R_{j} \mat{\Lambda}_{j} - \R_{j}\Tr\B_{\lambda,j} \R_{j}\right) \bm{1},
\end{equation}
on all elements, where $\mat{\Lambda}_{j} = n_{\xi j} \Lamx + n_{\eta j} \Lamy$,
and $\B_{\lambda,j}$ is analogous to $\Blam$ for face $j$.  The left-hand side
of \eqref{eq:divfree_h} is a direct SBP discretization of the divergence-free
condition, while the right-hand side is a SAT-like penalty.  Our approach to
satisfying \eqref{eq:divfree_h} is described in Appendix~\ref{sec:Dlambda}.

If \eqref{eq:divfree_h} is satisfied, it follows from the properties of SBP operators that
\begin{equation}\label{eq:divfree_identity}
  \bm{1}^{T}\left(\Lamx \Qx + \Lamy \Qy\right) \bm{v}
  = \sum_{j=1}^{\kappa} \left(\bm{1}_{\hat{\Gamma}_{j}}\Tr\B_{\lambda,j} \R_{j}\right)\bm{v},\qquad\forall\; \bm{v} \in \mathbb{R}^{n}.
\end{equation}
Using identity \eqref{eq:divfree_identity} in \eqref{eq:condc_upwind} --- and
neglecting SATs on the non-shared faces as usual --- we find that the
conservation condition is satisfied.

\begin{remark}
  Divergence-free equations also arise in the so-called metric invariants that
  are needed for ``free-stream'' preservation; see, for example,
  \cite{Thomas1979geometric}.  These metric invariants can also be satisfied
  using the approach described in Appendix~\ref{sec:Dlambda}, by setting
  $\left[\lambda_{x},\lambda_{y}\right]\Tr = [1,0]\Tr$ and $[0,1]\Tr$, in turn.
\end{remark}

%=========================================================================
%
%	Example operators on the triangle
%
%=========================================================================

\section{Example operators on the triangle}\label{sec:triSATs}

In this section, we describe the construction of multi-dimensional SBP operators
on triangular elements in conjunction with the matrices $\R$ and $\B$ that
define the SATs.  We present two families of SBP operators for the triangle.
The first family was presented previously in~\cite{multiSBP}.  This family
consists of operators with $p+1$ nodes on each face and will be referred to as
the \SBPGamma family.  Figure~\ref{fig:nodes_bndry} shows the $p=1$ through
$p=4$ operators from this family.  The second family of triangular-element SBP
operators has strictly interior nodes.  This family will be referred to as the
\SBPOmega family, and the first four operators in this family\footnote{We do
  not consider the $p=0$ operator in this work} are shown in
Figure~\ref{fig:nodes_intr}.

The algorithmic steps involved in constructing the operators are listed below.
The process is similar to that outlined in~\cite{multiSBP} for \SBPGamma, with a
few minor changes that are highlighted.
\begin{enumerate}
\item For a given design accuracy $p$, a symmetric cubature rule is selected or
  constructed that is exact for polynomials of total degree $2p-1$ and has at
  least $\nmin{p}$ nodes.  The nodes for the \SBPOmega family are required
  to be strictly interior, and the \SBPGamma family is required to have $p+1$
  nodes on each face, including the vertices.  For all \SBPOmega operators
  considered here ($p=1,\ldots,4$), there are exactly $\nmin{p}$ cubature
  nodes, whereas the \SBPGamma operators generally have more nodes for the same
  value of $p$.

\item A Legendre-Gauss quadrature rule with $p+1$ nodes is used to define $\B_{\nu}$
  on all faces, \ie the same quadrature rule is used for all three sides,
  although this is not strictly necessary.

\item Let $\Gamhat_{j}$ denote one of the faces of the triangle.  Then
  the volume-to-face interpolation/extrapolation operator for this face is defined by $\R =
  \V_{\Gamhat_{j}} (\V_{\Omhat})^{\dagger}$, where
  $\V_{\Gamhat_{j}}$ denotes an orthogonal polynomial basis evaluated at
  the quadrature nodes of $\Gamhat_{j}$, and the superscript $\dagger$
  denotes the Moore-Penrose pseudoinverse.  The definition of
  $\V_{\Omhat}$ depends on whether we are constructing the \SBPGamma or
  \SBPOmega family.  For the latter, $\V_{\Omhat}$ is an orthogonal
  polynomial basis evaluated at all of the nodes in the volume.  In
  contrast, for the \SBPGamma family, $\V_{\Omhat}$ is the basis evaluated
  at the $p+1$ volume nodes that lie on face $\Gamhat_{j}$.

  Although we have considered only the face $\Gamhat_{j}$, symmetry
  allows the same $\R$ matrix to be used on all three faces simply by permuting
  indices of the volume nodes.

\item The boundary operator $\Ex$ is constructed from the face cubature $\B$ and
  interpolation operator $\R$ using~\eqref{eq:SATdecomp}.  An analogous
  equation is used for $\Ey$.

\item The skew-symmetric operators $\Sx$ and $\Sy$ are determined using the
  accuracy conditions, Property~\ref{sbp:accuracy} of Definition~\ref{def:SBP}.
  For the \SBPOmega operators considered here, the $\Sx$ and $\Sy$ operators
  are fully determined by the accuracy conditions; in contrast, the \SBPGamma
  operators are underdetermined by the accuracy conditions, so the minimum-norm
  solution is used for those operators.
\end{enumerate}

\ignore{ In the final paper, we may wish to do a theoretical complexity study
  for D-SBP implementations}

\begin{figure}[t]
  \renewcommand{\thesubfigure}{}
  \subfigure[$p=1$ \label{fig:p1_bndry}]{%
    \includegraphics[width=0.24\textwidth]{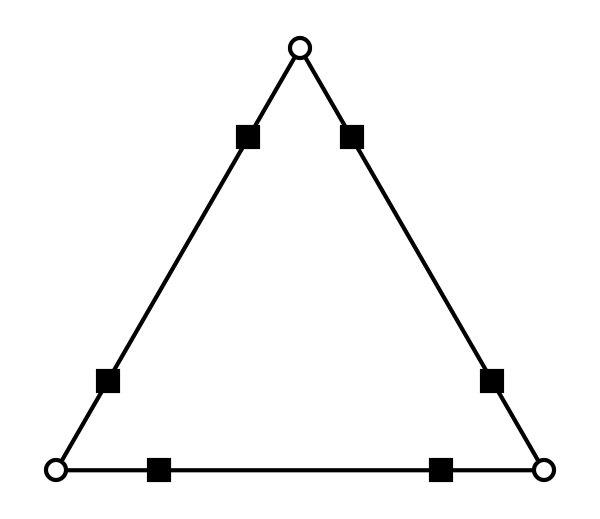}}
  \subfigure[$p=2$ \label{fig:p2_bndry}]{%
        \includegraphics[width=0.24\textwidth]{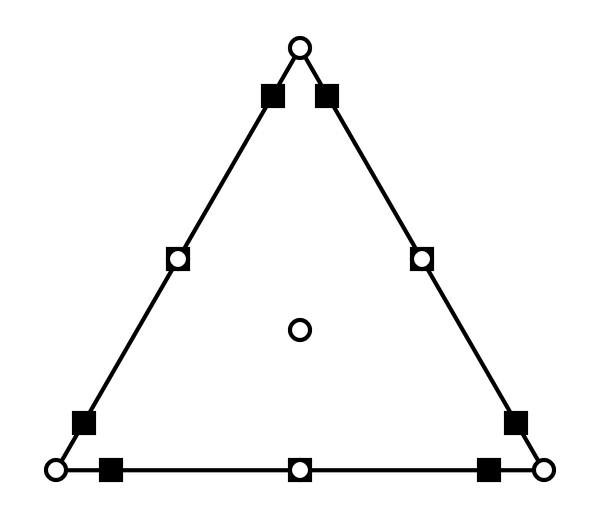}}
  \subfigure[$p=3$ \label{fig:p3_bndry}]{%
        \includegraphics[width=0.24\textwidth]{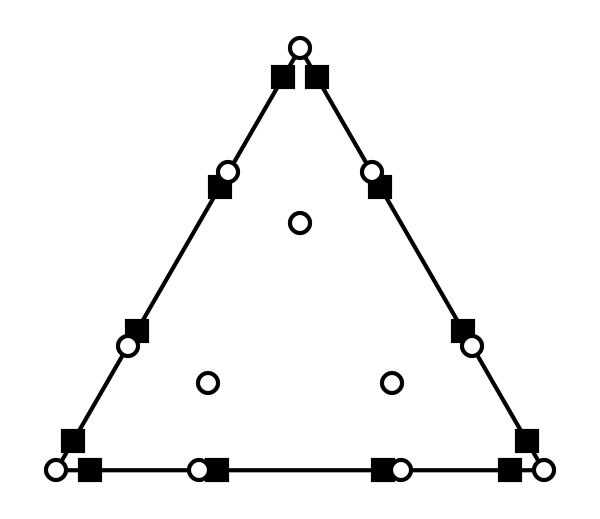}}
  \subfigure[$p=4$ \label{fig:p4_bndry}]{%
        \includegraphics[width=0.24\textwidth]{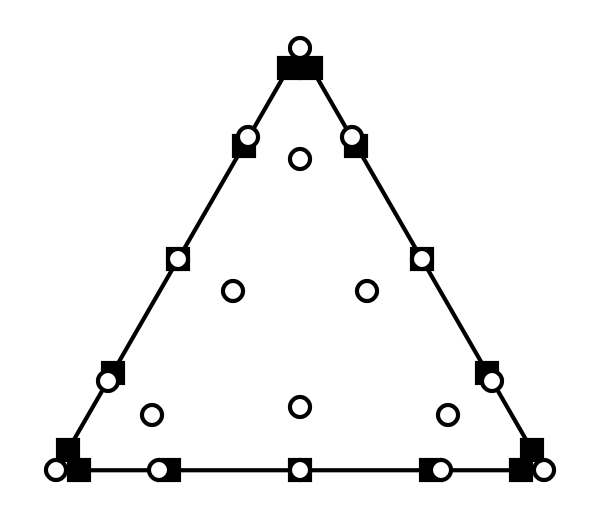}}
  \caption{Nodes of the \SBPGamma family of operators that include $p+1$ nodes
    on each face.  The open circles denote the SBP operator nodes, while the black squares denote the face cubature points used for the SATs.\label{fig:nodes_bndry}}
\end{figure}

\begin{figure}[t]
  \renewcommand{\thesubfigure}{}
  \subfigure[$p=1$ \label{fig:p1_intr}]{%
    \includegraphics[width=0.24\textwidth]{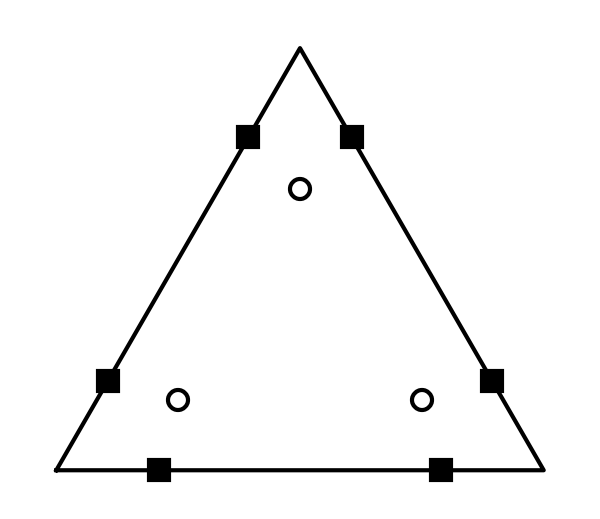}}
  \subfigure[$p=2$ \label{fig:p2_intr}]{%
        \includegraphics[width=0.24\textwidth]{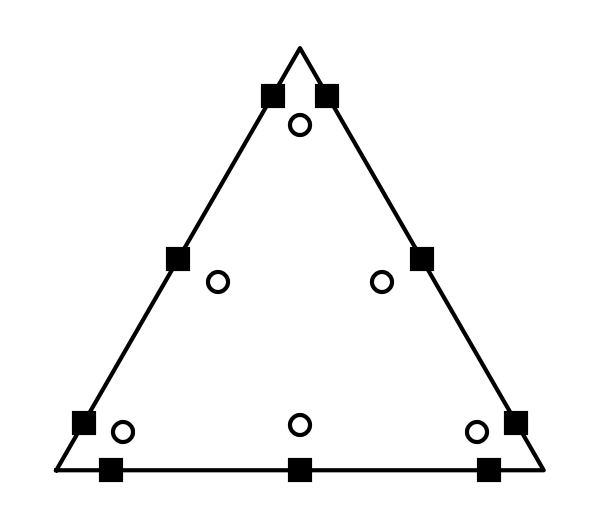}}
  \subfigure[$p=3$ \label{fig:p3_intr}]{%
        \includegraphics[width=0.24\textwidth]{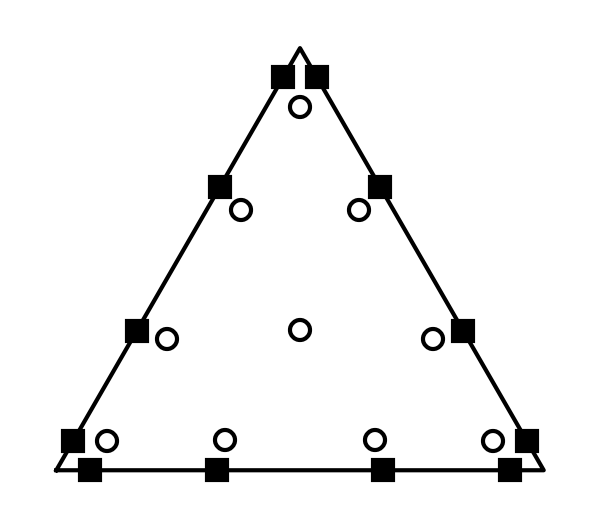}}
  \subfigure[$p=4$ \label{fig:p4_intr}]{%
        \includegraphics[width=0.24\textwidth]{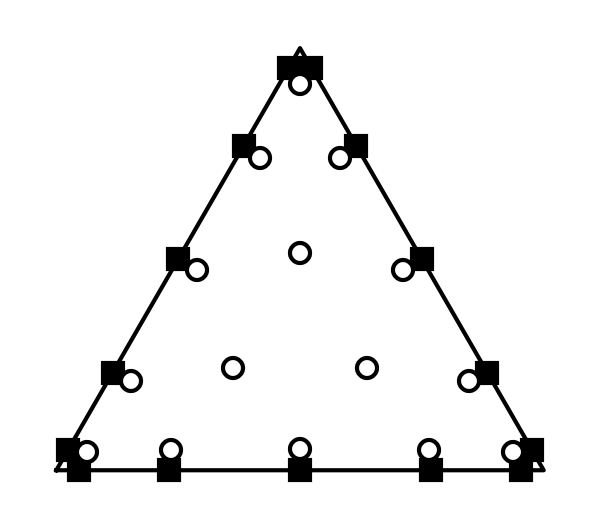}}
  \caption{Nodes of the \SBPOmega family of operators whose nodes are strictly
    interior to $\Omega$. The open circles denote the SBP operator nodes, while
    the black squares denote the face cubature points used for the
    SATs.\label{fig:nodes_intr}}
\end{figure}
\ignore{
\begin{remark}
  The explicit use of $\R$ and $\B$ to define the $\Ex$ and $\Ey$ matrices is
  important, because the same $\R$ and $\B$ are subsequently used to define
  SATs.  For the stability and conservation analysis of Section~\ref{sec:scvar}
  to hold, the $\E$ matrices and SATs must be defined in a consistent manner.
\end{remark}
}

Table~\ref{tab:SBPtri} summarizes the accuracy and node-set properties of both
the \SBPOmega and \SBPGamma families.  Beyond the fact that \SBPGamma includes
boundary nodes and \SBPOmega excludes boundary nodes, a few other differences
between the families are worth highlighting.  First, the \SBPGamma family
generally requires more nodes than the \SBPOmega family for the same design
accuracy $p$; this translates into $\Dx$ and $\Dy$ operators that require more
storage and computation, at least for hyperbolic problems.  Second, the cubature
accuracy is higher for the \SBPOmega family; the $p=1$ and $p=2$ operators have
cubatures that are exact to degree $2p$, rather than $2p-1$, and the $p=3$ and
$p=4$ operators have smaller error constants.  Finally, the volume-to-face
interpolation operators used by the \SBPGamma operators have fewer entries,
giving them a computational advantage when it comes to evaluating the SATs.

\begin{table}[t]
  \begin{center}
    \caption[]{Summary of cubature accuracy, node counts, and operator
      dimensions for the two different families of SBP operators on the
      triangle. \label{tab:SBPtri}}
    \begin{tabular}{lcccc}
      \textbf{family} & \textbf{degree} ($p$) & \textbf{\# nodes} ($n$) & $\M$ \textbf{degree} & $\R$ \textbf{matrix size} \\\hline
      \SBPGamma & 1 & 3  & 1 & $2\times 2$\rule{0ex}{3ex} \\
      \SBPOmega & 1 & 3  & 2 & $2\times 3$ \\\hline
      \SBPGamma & 2 & 7  & 3 & $3\times 3$\rule{0ex}{3ex} \\
      \SBPOmega & 2 & 6  & 4 & $3\times 6$ \\\hline
      \SBPGamma & 3 & 12 & 5 & $4\times 4$\rule{0ex}{3ex} \\
      \SBPOmega & 3 & 10 & 5 & $4\times 10$ \\\hline
      \SBPGamma & 4 & 18 & 7 & $5\times 5$\rule{0ex}{3ex} \\
      \SBPOmega & 4 & 15 & 7 & $5\times 15$ \\\hline
    \end{tabular}
  \end{center}
\end{table}
%--old
\ignore{
The above analysis is general, since it applies to any multi-dimensional SBP
discretization with SATs that use the decomposition described by
Theorem~\ref{thm:SATdecomp}.  In an effort to make these ideas more concrete, in
this section and the next we consider SBP-SAT discretizations on grids composed
of triangular elements.

As shown in~\cite{multiSBP}, a multi-dimensional SBP operator of degree $p$
exists for any domain that has 1) a cubature rule of degree at least $2p-1$ with
2) strictly positive weights and 3) a full-rank generalized Vandermonde matrix.
In general, such cubature rules are not unique, and there exists an infinite
number of SBP operators for a given domain.  To illustrate this fact, we present
two families of SBP operators for the triangle.

The first family of SBP operators on the triangle was presented previously
in~\cite{multiSBP}.  This family consists of operators with $p+1$ nodes on each
face and will be referred to as the \SBPGamma family.
Figure~\ref{fig:nodes_bndry} shows the $p=1$ through $p=4$ nodal distributions
from this family.  This family is analogous to tensor-product spectral elements
that use Legendre-Gauss-Lobatto quadrature nodes.

For this work, a second family of triangular-element SBP operators were
constructed that has strictly interior nodes.  This family will be referred to
as the \SBPOmega family, and the first four operators in this family\footnote{We
  did not consider the $p=0$ operator in this work} are shown in
Figure~\ref{fig:nodes_intr}.  This family is akin to tensor-product spectral
elements that use Legendre-Gauss quadrature nodes.

The steps involved in constructing the operators are listed below.
The process is similar to that outlined in~\cite{multiSBP} for \SBPGamma, with a
few minor changes that are highlighted.
\begin{enumerate}
\item For a given design accuracy $p$, a symmetric cubature rule is selected (or
  computed) that is exact for polynomials of total degree $2p-1$ and has at
  least $(p+1)(p+2)/2$ nodes.  The nodes for the \SBPOmega family are required
  to be strictly interior, while the \SBPGamma family is required to have $p+1$
  nodes on each face, including the vertices.  For all \SBPOmega operators
  considered here ($p=1,\ldots,4$), there are exactly $(p+1)(p+2)/2$ cubature
  nodes, whereas the \SBPGamma operators generally have more nodes for the same
  value of $p$.

\item A Legendre-Gauss quadrature rule with $p+1$ nodes is used to define $\B$
  on all faces, \ie the same quadrature rule is used for all three sides,
  although this is not strictly necessary.

\item Let $\Gamma_{\nu}$ denote one of the faces of the triangle.  Then the
  volume-to-face interpolation operator for this face is defined by $\R =
  \V_{\Gamma_{\nu}} (\V_{\Omega})^{\dagger}$, where $\V_{\Gamma_{\nu}}$ denotes
  an orthogonal polynomial basis evaluated at the quadrature nodes of
  $\Gamma_{\nu}$ and the superscript $\dagger$ denotes the Moore-Penrose
  pseudoinverse.  The definition of $\V_{\Omega}$ depends on whether we are
  constructing the \SBPGamma or \SBPOmega family.  For the latter, $\V_{\Omega}$
  is the orthogonal polynomial basis evaluated at all of the nodes in the volume
  cubature.  In contrast, for the \SBPGamma family, $\V_{\Omega}$ is the basis
  evaluated at the $p+1$ volume cubature nodes that lie on face $\Gamma_{\nu}$.

  Although $\R$ is constructed explicitly only on face $\Gamma_{\nu}$, symmetry
  allows the same $\R$ matrix to be used on all three faces simply by permuting
  indices of the volume nodes.

\item The boundary operator $\Ex$ is constructed from the face cubature $\B$ and
  interpolation operator $\R$ using equation~\eqref{eq:SATdecomp}.  An analogous
  equation is used for $\Ey$.

\item The skew-symmetric operators $\Qx$ and $\Qy$ are determined using the
  accuracy conditions, Property~\ref{sbp:accuracy} of Definition~\ref{def:SBP}.
  For the \SBPOmega operators considered here, the $\Qx$ and $\Qy$ operators are
  fully determined by the accuracy conditions.  The \SBPGamma operators for
  $p=1$ and $p=2$ are also uniquely defined, whereas the $p=3$ and $p=4$
  operators are underdetermined by the accuracy conditions; for these operators
  the minimum-norm solution is used to define $\Qx$ and $\Qy$.
\end{enumerate}

\begin{figure}[t]
  \renewcommand{\thesubfigure}{}
  \subfigure[$p=1$ \label{fig:p1_bndry}]{%
    \includegraphics[width=0.24\textwidth]{p1_bndry}}
  \subfigure[$p=2$ \label{fig:p2_bndry}]{%
        \includegraphics[width=0.24\textwidth]{p2_bndry}}
  \subfigure[$p=3$ \label{fig:p3_bndry}]{%
        \includegraphics[width=0.24\textwidth]{p3_bndry}}
  \subfigure[$p=4$ \label{fig:p4_bndry}]{%
        \includegraphics[width=0.24\textwidth]{p4_bndry}}
  \caption{Nodes of the \SBPGamma family of operators that include $p+1$ nodes
    on each face.  The open circles denote the SBP operator nodes, while the black squares denote the face cubature points used for the SATs.\label{fig:nodes_bndry}}
\end{figure}

\begin{figure}[t]
  \renewcommand{\thesubfigure}{}
  \subfigure[$p=1$ \label{fig:p1_intr}]{%
    \includegraphics[width=0.24\textwidth]{p1_intr}}
  \subfigure[$p=2$ \label{fig:p2_intr}]{%
        \includegraphics[width=0.24\textwidth]{p2_intr}}
  \subfigure[$p=3$ \label{fig:p3_intr}]{%
        \includegraphics[width=0.24\textwidth]{p3_intr}}
  \subfigure[$p=4$ \label{fig:p4_intr}]{%
        \includegraphics[width=0.24\textwidth]{p4_intr}}
  \caption{Nodes of the \SBPOmega family of operators whose nodes are strictly
    interior to $\Omega$. The open circles denote the SBP operator nodes, while
    the black squares denote the face cubature points used for the
    SATs.\label{fig:nodes_intr}}
\end{figure}

\begin{remark}
  The explicit use of $\R$ and $\B$ to define the $\Ex$ and $\Ey$ matrices is
  important, because the same $\R$ and $\B$ are subsequently used to define
  SATs.  For the stability and conservation analysis of Section~\ref{sec:scvar}
  to hold, the $\E$ matrices and SATs must be defined in a consistent manner.
\end{remark}

Table~\ref{tab:SBPtri} summarizes the accuracy and node-set properties of both
the \SBPOmega and \SBPGamma families.  Beyond the fact that \SBPGamma includes
boundary nodes and \SBPOmega excludes boundary nodes, a few other differences
between the families are worth highlighting.  First, the \SBPGamma family
generally requires more nodes than the \SBPOmega family for the same design
accuracy $p$; this translates into $\Dx$ and $\Dy$ operators that require more
storage and computation.  Second, the $p=1$ and $p=2$ operators in the \SBPOmega
family have cubatures that are exact for polynomials of degree $2p$ (rather than
$2p-1$).  Finally, the volume-to-face interpolation operators used by the
\SBPGamma operators are smaller, giving them a computational advantage when it
comes to evaluating the SATs.

\begin{table}[t]
  \begin{center}
    \caption[]{Summary of cubature accuracy, node counts, and operator
      dimensions for the two different families of SBP operators on the
      triangle. \label{tab:SBPtri}}
    \begin{tabular}{lcccc}
      \textbf{family} & \textbf{degree} ($p$) & \textbf{\# nodes} ($n$) & $\M$ \textbf{degree} & $\R$ \textbf{matrix size} \\\hline
      \SBPGamma & 1 & 3  & 1 & $2\times 2$ \rule{0ex}{3ex} \\
      \SBPOmega & 1 & 3  & 2 & $2\times 3$ \\\hline
      \SBPGamma & 2 & 7  & 3 & $3\times 3$ \rule{0ex}{3ex} \\
      \SBPOmega & 2 & 6  & 4 & $3\times 6$ \\\hline
      \SBPGamma & 3 & 12 & 5 & $4\times 4$ \rule{0ex}{3ex} \\
      \SBPOmega & 3 & 10 & 5 & $4\times 10$ \\\hline
      \SBPGamma & 4 & 18 & 7 & $5\times 5$ \rule{0ex}{3ex} \\
      \SBPOmega & 4 & 15 & 7 & $5\times 15$ \\\hline
    \end{tabular}
  \end{center}
\end{table}
}%end old
%=========================================================================
%
%	Numerical Verification
%
%=========================================================================
\section{Numerical verifications}\label{sec:results}

In this section, we use numerical experiments to demonstrate the accuracy,
conservation, and stability properties of multi-dimensional SBP-SAT
discretizations.  These experiments are intended to verify the theory developed
in Sections~\ref{sec:var} and~\ref{sec:concrete}.  Before presenting the
individual verifications, we first describe their common features.

Each experiment is based on the linear advection PDE with a divergence-free
velocity field, Equation~\eqref{eq:divfree}.  In all cases the domain is the
unit square, $\Omega = [0,1]^2$, and the boundary conditions are periodic:
$\fnc{U}(0,y,t) = \fnc{U}(1,y,t)$ and $\fnc{U}(x,0,t) = \fnc{U}(x,1,t)$.

For each SBP element, we introduce a curvilinear coordinate
transformation\linebreak $(x(\xi,\eta),y(\xi,\eta))$.  Under this
transformation, it is straightforward to show that \eqref{eq:divfree} is
equivalent to
\begin{equation}
  \frac{\partial\fnc{J} \fnc{U}}{\partial t} + \frac{1}{2}\vecnabla_{\xi} \cdot \left( \veclambda_{\xi} \fnc{U}\right) + \frac{1}{2} \veclambda_{\xi} \cdot \vecnabla_{\xi} \left( \fnc{U}\right) = 0, \quad \text{where}\qquad \vecnabla_{\xi}\cdot \veclambda_{\xi} = 0.\label{eq:curvyadvect}
\end{equation}
Thus, the transformed velocity field is divergence-free in the space
$(\xi,\eta)$.

We consider a monolithic coordinate transformation that is applied over the
entire $(x,y)$ domain, because this simplifies mesh refinement studies by
permitting uniform grid refinement in $(\xi,\eta)$ space.  Let $N$ denote the
number of element edges along the $\xi$ and $\eta$ coordinates.  The vertices of
the elements are located at $(\xi_{i},\eta_{j}) = (ih, jh), \; \forall i,j =
0,1,\ldots,N$, where $h = 1/N$.  For each of the $N^{2}$ quadrilaterals, two
right triangles are generated from the vertices
\begin{equation*}
\{(\xi_{i},\eta_{j}), (\xi_{i+1},\eta_{j}), (\xi_{i},\eta_{j+1})\}
\qquad\text{and}\qquad
\{(\xi_{i+1},\eta_{j+1}),(\xi_{i},\eta_{j+1}),(\xi_{i+1},\eta_{j})\}.
\end{equation*}
The nodes associated with these right triangles are then mapped to physical space using an analytical coordinate transformation.

The SBP-SAT spatial discretization of the PDE \eqref{eq:curvyadvect} is given by
\eqref{eq:discdivfree} with the SAT matrices defined by the upwind scheme in
Section~\ref{sec:upwind}.  As explained in Section \ref{sec:divfree}, the
discretization \eqref{eq:discdivfree} must satisfy \eqref{eq:condc_upwind} to
achieve discrete conservation.  To this end, we project the analytical advection
field onto a discrete field that satisfies \eqref{eq:divfree_h}, the discrete
divergence-free equation.  The details of this projection can be found in
Appendix \ref{sec:Dlambda}.

The SBP-SAT semi-discretizations are advanced in time using the classical
4th-order Runge-Kutta scheme with a sufficiently small time step to ensure that
the error is dominated by the spatial discretization.  In particular, the time
step is one half the maximally stable value permitted by the Courant number for
a given SBP element, where the Courant number is defined as
\begin{equation*}
\mathsf{CFL} = \frac{\Delta t \|\veclambda_{\xi} \|}{h \Delta r},
\end{equation*}
for a time step of $\Delta t$ and a nominal node spacing of $h\Delta r$.  Here,
$\Delta r$ is the minimum distance between cubature nodes on a right triangle
with vertices at $(0,0)$, $(1,0)$ and $(0,1)$.  Table~\ref{tab:courant} lists
$\Delta r$ and the maximally stable Courant numbers for the \SBPGamma and
\SBPOmega elements when applied to constant-coefficient advection with
$\lamxx=\lamyy$.

\begin{table}[tbp]
  \caption[]{Maximally stable Courant numbers and minimum node spacing for
    discretizations of constant-coefficient advection based on the \SBPGamma and
    \SBPOmega operators.\label{tab:courant}}
  \begin{center}
    \begin{tabular}{llllll}
      & & \textbf{p=1} & \textbf{p=2} & \textbf{p=3} & \textbf{p=4} \\\hline
      \rule{0ex}{3ex}\SBPGamma & $\mathsf{CFL}_{\max}$ & 0.7500 & 1.3398 &
      1.2045 & 1.1597 \\
        & $\Delta r$ & 1.0000 & 0.2357 & 0.1487 & 0.0949 \\\hline
      \rule{0ex}{3ex}\SBPOmega & $\mathsf{CFL}_{\max}$ & 0.5217 & 0.4130 &
      0.3083 & 0.3428 \\
        & $\Delta r$ & 0.5000 & 0.3378 & 0.2402 & 0.1636 \\
    \end{tabular}
  \end{center}
\end{table}

\subsection{Constant-coefficient advection with a curvilinear coordinate mapping}

As our first verification of the SBP-SAT discretizations, we conduct a mesh
refinement study and discretize the constant-coefficient advection equation with
$\veclambda = \left[1,1\right]\Tr$.  While this PDE does not have a spatially
varying velocity field, we employ a curvilinear coordinate transformation given
by
\begin{equation*}
\begin{bmatrix} x \\ y \end{bmatrix}
=
\begin{bmatrix}
  \xi + \frac{1}{5} \sin(\pi\xi)\sin(\pi\eta) \\
  \eta - \frac{1}{5}\exp(\eta)\sin(\pi\xi)\sin(\pi\eta)
\end{bmatrix},
\end{equation*}
where $(\xi,\eta) \in [0,1]^{2}$.  Consequently, the transformed PDE,
~\eqref{eq:curvyadvect}, does have a spatially-varying velocity field even
though the physical-space PDE does not.  The sequence of grids for the mesh refinement
study is generated as described earlier using $N \in \{12, 24, 36, 48, 60, 72\}$.
The initial condition for the accuracy study is a bell-shaped function centered
at $\left(\frac{1}{2},\frac{1}{2}\right)$ with compact support:
\begin{equation*}
\fnc{U}(x,y,0) = \begin{cases}
    1 - (4\rho^2-1)^5 & \text{if}\; \rho \leq \frac{1}{2} \\
    1, &\text{otherwise},
  \end{cases}
\end{equation*}
where $\rho(x,y) \equiv \sqrt{(x-\frac{1}{2})^2 + (y-\frac{1}{2})^2}$.  The
solution is advanced one time unit, which returns the bell-shaped initial
condition to its initial position.

\subsubsection{Accuracy}

To assess the accuracy of the discrete solutions, we evaluate the SBP-based
$L^{2}$ norm of the difference between the numerical solution and the exact
solution.  We then normalize by the norm of the exact solution; that is,
\begin{equation*}
\textsf{Normalized}\; L^{2}\; \textsf{Error} =
\frac{\sqrt{(\bm{u} - \bm{u}_{e})\Tr \Mg (\bm{u} - \bm{u}_{e})}}{\sqrt{\bm{u}_{e}\Tr\M_{g}\bm{u}_{e}}},
\end{equation*}
where $\bm{u}$ is the discrete solution at the final time, and $\bm{u}_{e}$ is
the exact solution evaluated at the mesh nodes at time $t=1$.  The matrix $\Mg$
is the global SBP-norm assembled from the local element SBP-norm matrices scaled
by the appropriate mapping Jacobian determinant on each element, \ie it is the
diagonal mass matrix.

The accuracy results of the mesh refinement study are shown in
Figure~\ref{fig:accuracy_curvy} for discretizations based on the \SBPGamma and
\SBPOmega families of operators.  The expected asymptotic convergence rate for
the errors is $\text{O}(h^{p+1})$, and most of the schemes exhibit this
convergence rate.  The \SBPGamma $p=1$ discretization is the only scheme that
has a suboptimal convergence rate for the range of meshes considered --- it could be the case that the sequence of meshes was not sufficiently fine so as to be in the asymptotic region.  The
\SBPGamma $p=2$ scheme and the \SBPOmega $p=1$ and $p=2$ schemes exhibit
$\text{O}(h^{p+2})$ rates.

For the same $h$ and $p$, the two SBP families produce notably different
absolute errors.  The difference is especially significant for the $p=1$ and
$p=2$ schemes.  On the finest grid, the error in the \SBPOmega $p=1$ solution is
16.6 times smaller than the corresponding error in the \SBPGamma $p=1$ solution.
The solution errors of the $p=2$ schemes differ by a factor of 5 on the finest
grid.  We believe this difference is related to the increased accuracy of the
SBP cubature rules associated with the \SBPOmega schemes.

\begin{figure}[tbp]
  \subfigure[\SBPGamma family \label{fig:accuracy_SBPGamma}]{%
    \includegraphics[width=0.48\textwidth]{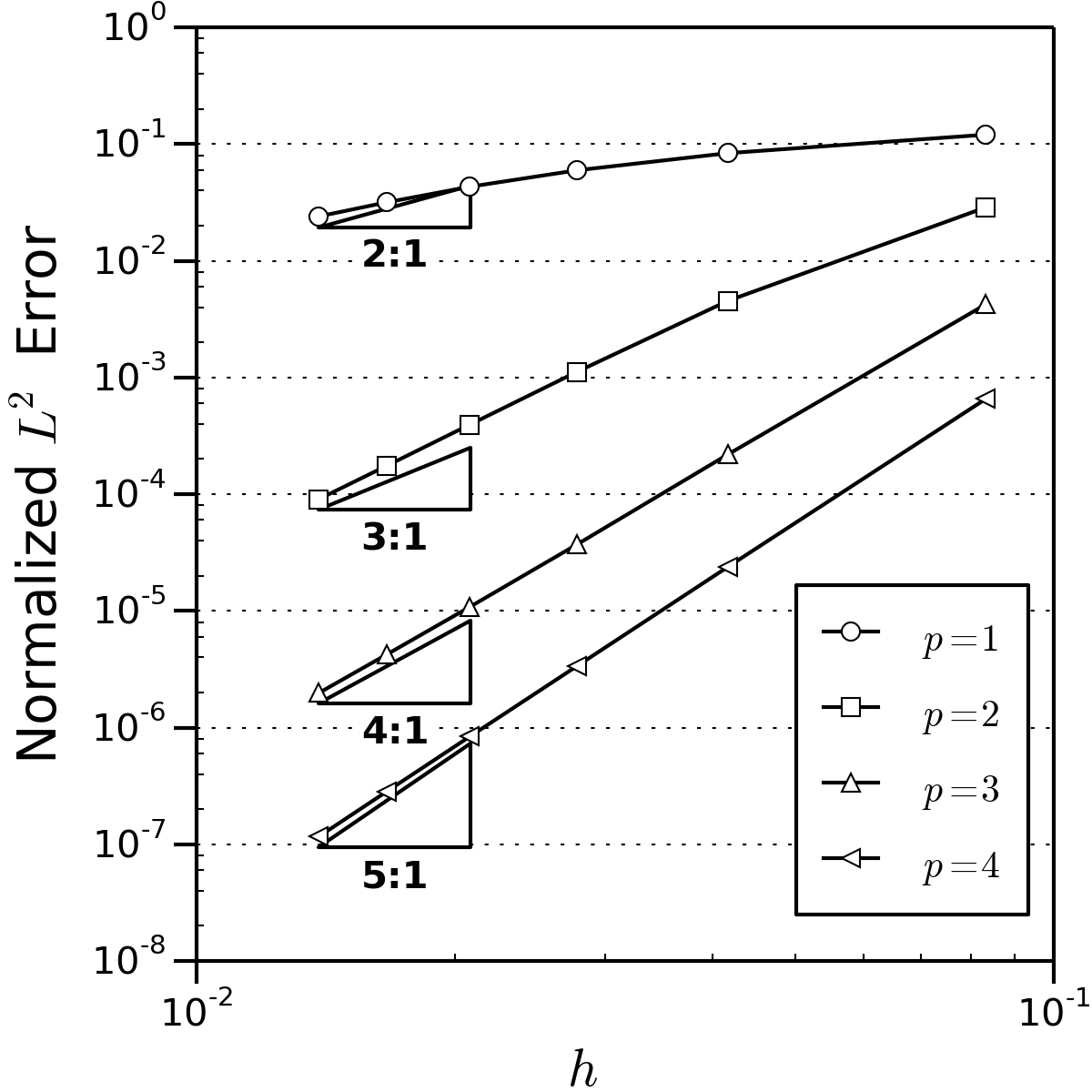}}
  \subfigure[\SBPOmega family \label{fig:accuracy_SBPOmega}]{%
    \includegraphics[width=0.48\textwidth]{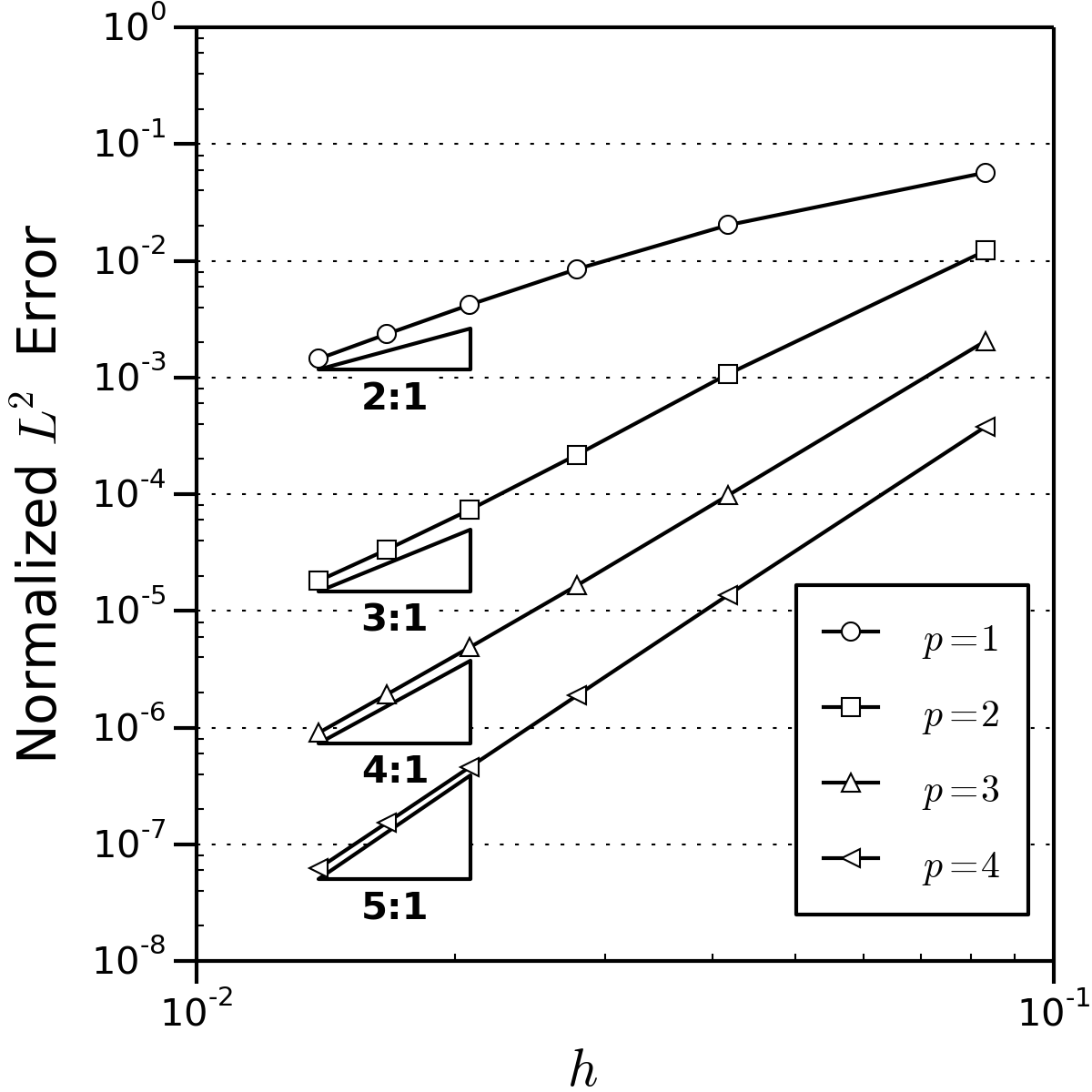}}
  \caption{Normalized error, measured in the SBP-norm, between the discrete and exact
    solutions to \eqref{eq:curvyadvect} for different mesh spacing and SBP
    operators.\label{fig:accuracy_curvy}}
\end{figure}

\subsubsection{Conservation}

The integral of the analytical solution of \eqref{eq:curvyadvect} is constant
in time, because the PDE is conservative and the boundary conditions are
periodic.  Based on the analysis in Section~\ref{sec:var}, the SBP-SAT
discretization should mimic this property, and the schemes should be
conservative to machine precision.

Discrete conservation is assessed using the following metric:
\begin{equation*}
  \mathsf{Conservation\; Metric} \equiv |\bm{1}^{T} \Mg \bm{u}_{0} - \bm{1}^{T}
  \Mg \bm{u}|,
\end{equation*}
where $\bm{u}_{0}$ is the initial condition evaluated at the nodes and, as
before, $\bm{u}$ is the discrete solution at $t=1$.
Figure~\ref{fig:conserve_and_stab} plots this metric for the \SBPGamma and
\SBPOmega discretizations on each of the grids in the mesh refinement study.
These results provide strong evidence that the SBP-SAT discretizations are
conservative.  Note that the \SBPOmega $p=2$ scheme produces a double-precision
zero for the conservation metric on the coarsest grid, which cannot be
represented on the logarithmic scale.

\subsubsection{Stability}

The $L^2$ norm of the analytical solution to \eqref{eq:curvyadvect} is also
constant in time; however, in contrast with conservation, the energy of the
SBP-SAT discrete solution is only guaranteed to be non-increasing when upwind
SATs are used, in general.  To assess the various schemes' ability to conserve
energy, we evaluate the energy error for each mesh and operator:
\begin{equation*}
  \mathsf{Energy\; Error} \equiv \bm{u}_{0}^{T} \Mg \bm{u}_{0} - \bm{u}^{T} \Mg
  \bm{u}.
\end{equation*}

The energy errors are included in Figure~\ref{fig:conserve_and_stab} above the
conservation metrics.  Since the energy error is the signed difference between
the initial and final values, it offers some evidence that the energy is
non-increasing; stronger evidence is provided below in Section~\ref{sec:robust}

\begin{remark}
The rate of convergence of the energy error is approximately $2p$ for the
\SBPGamma schemes and $2p+1$ for the \SBPOmega schemes.  This is an example of
functional superconvergence, which has also been observed and explained for
tensor-product SBP schemes~\cite{Hicken2011superconvergent}.
\end{remark}

\begin{figure}[tbp]
  family \label{fig:conserve_and_stab_gamma}]{%
      \includegraphics[width=0.48\textwidth]{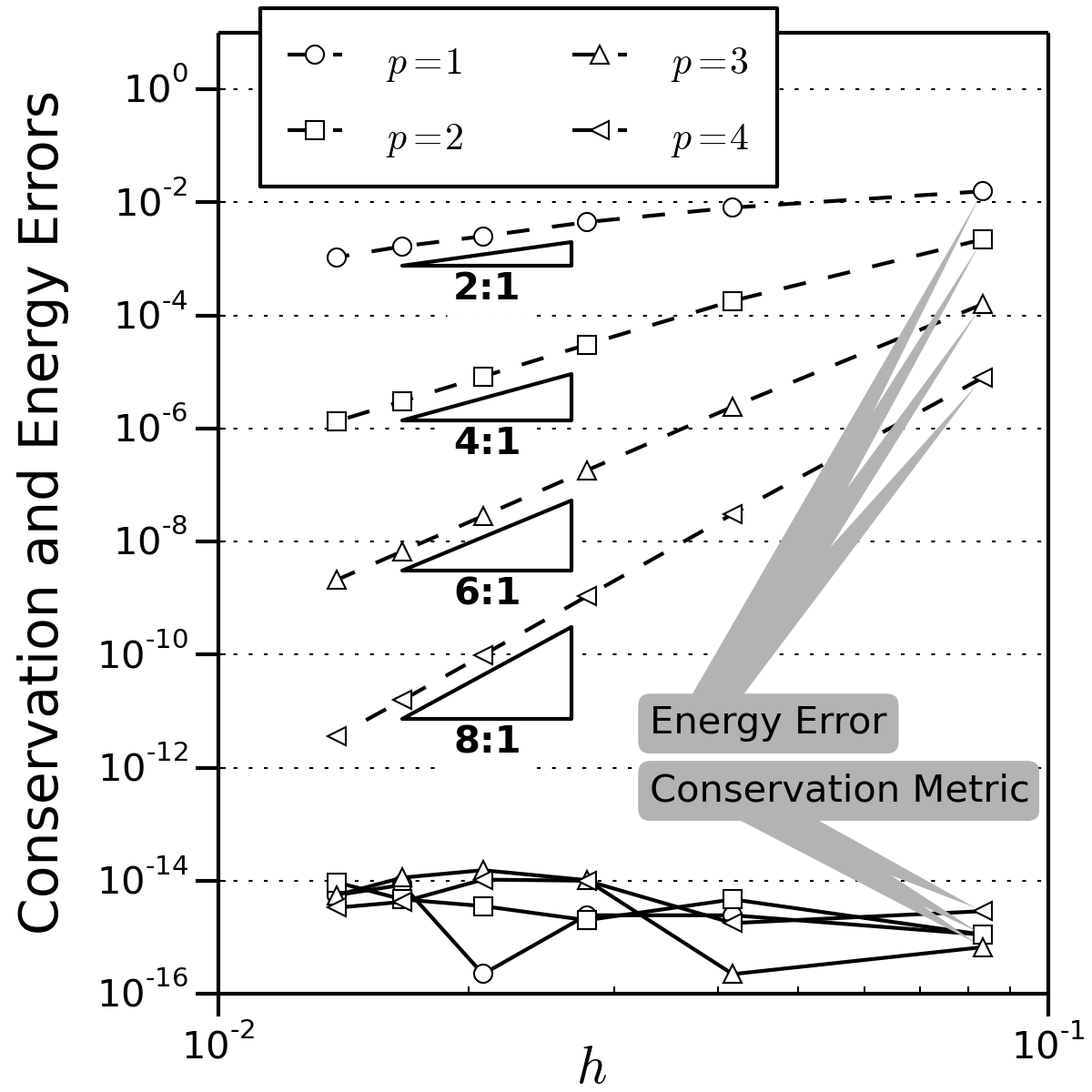}}
    \subfigure[\SBPOmega family \label{fig:conserve_and_stab_omega}]{%
      \includegraphics[width=0.48\textwidth]{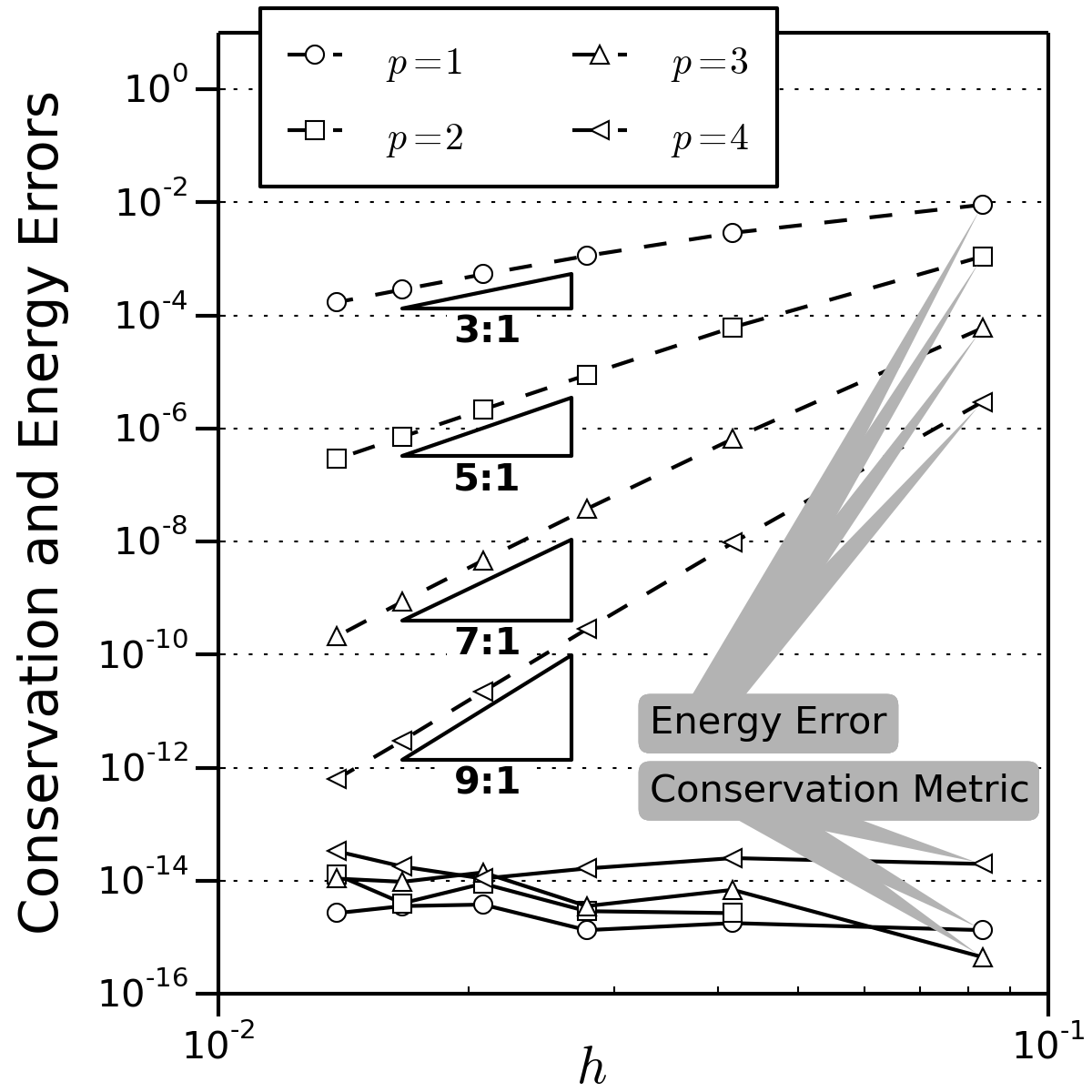}}
  \caption{Conservation and energy errors for different mesh spacing and SBP
    operators.\label{fig:conserve_and_stab}}
\end{figure}

\subsection{Robustness of SBP-SAT discretizations: advection in a confined domain}
\label{sec:robust}

For the second numerical experiment, we consider a challenging test of
the numerical stability of the SBP-SAT discretizations.  The test case is
challenging, because the advection field,
\begin{equation*}
  \veclambda = \begin{bmatrix}
    \pi \sin(\pi x) \cos(\pi y), \\
  -\  \pi \cos(\pi x) \sin(\pi y)
  \end{bmatrix},
\end{equation*}
is parallel to the boundary of the domain and produces no boundary flux; thus,
the solution, and its energy, are confined to the domain.  In addition, the
nonpolynomial velocity and solution produce aliasing errors that the numerical
scheme must handle ``gracefully.''

The initial condition is given by $\fnc{U}(x,y,0) = \exp(xy)$, and the solution
is advanced for 10 nondimensional time units on a uniform grid with $N=12$ edges
in each direction, \ie there are $2N^2 = 288$ elements in total.  As before, the
time step is set such that the Courant number is one half the value of
$\mathsf{CFL}_{\max}$ listed in Table~\ref{tab:courant}; however, we emphasize
that we are only interested in assessing the stability of the methods with this
experiment, and the discrete solutions after 10 time units are not accurate for
the coarse grids considered.  To give some indication of the solution behavior
and the time duration, Figure~\ref{fig:robust_solution} shows the initial
solution and the exact solution after \emph{only one unit of time}.

\begin{figure}[tp]
  \subfigure[initial condition \label{fig:divfree_initial}]{%
    \includegraphics[width=0.49\textwidth]{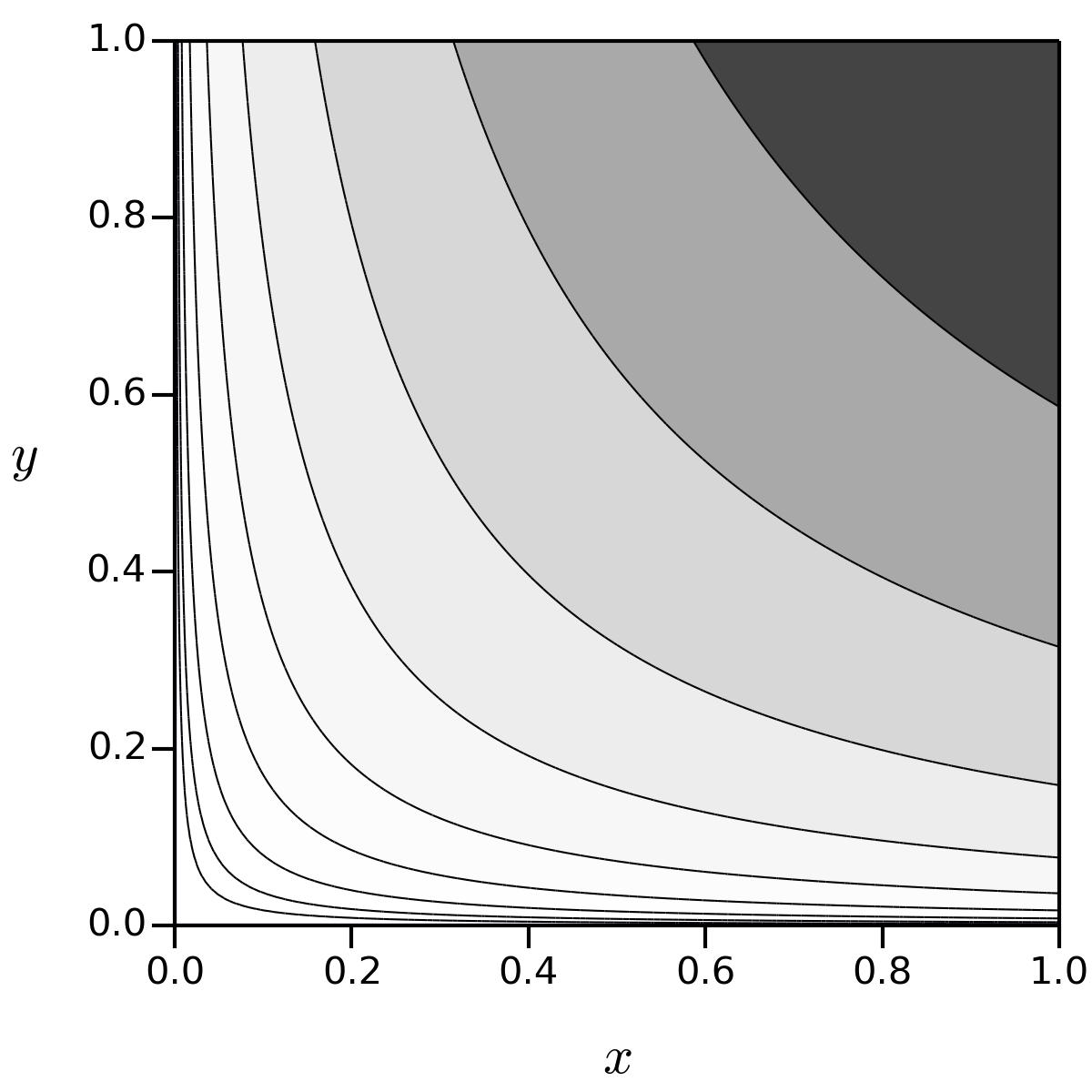}}
  \subfigure[exact solution at $t=1$ \label{fig:divfree_exact_solution}]{%
    \includegraphics[width=0.49\textwidth]{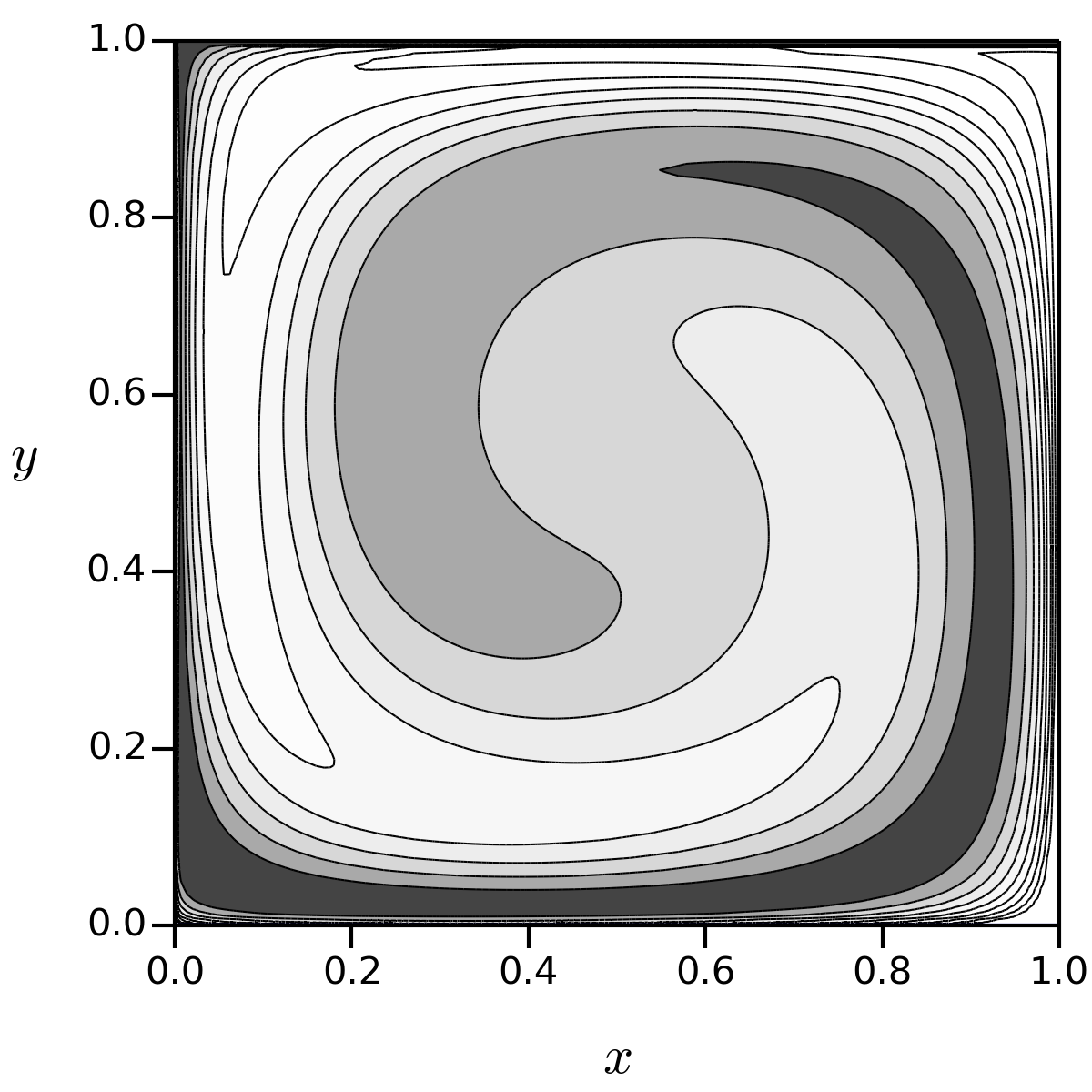}}
  \caption{Initial condition, left, and exact solution at $t=1$ for robustness test.\label{fig:robust_solution}}
\end{figure}

To demonstrate that the SBP-SAT discretizations are energy stable,
Figure~\ref{fig:energy_hist} shows the change in normalized energy as the
discrete solutions evolve from $t=0$ to $t=10$.  The normalized change in energy
is given by
\begin{equation*}
  \frac{\bm{u}(t)^{T} \Mg \bm{u}(t) - \bm{u}_{0}^{T} \Mg \bm{u}_{0}}{%
    \bm{u}_{0}^{T} \Mg \bm{u}_{0}} = \frac{\| \bm{u}(t)\|_{\Mg}^{2}}{%
      \| \bm{u}_{0} \|_{\Mg}^{2}} - 1
\end{equation*}
where $\bm{u}(t)$ denotes the discrete solution at time $t$.  As with the
conservation metric, we consider a uniform triangulation with $N=12$ edges in
each direction and 288 elements total.

Figures~\ref{fig:energy_hist_bndry} and \ref{fig:energy_hist_intr} show the
change in energy for the \SBPGamma and \SBPOmega families applied to the
skew-symmetric discretization~\eqref{eq:discdivfree} with upwind SATs.  The plots show
that the SBP-SAT discretizations have nonincreasing energies, as expected from
the analysis in Sections~\ref{sec:var} and \ref{sec:upwind}.  In contrast,
Figure~\ref{fig:energy_hist_bndryNDG} shows the change in energy for the
\SBPGamma family applied to the ``divergence'' form of the discretization,
namely
\begin{equation*}
\frac{d\mat{J} \bm{u}}{dt} + \Dx\Lamx \bm{u} + \Dy\Lamy \bm{u} = \bm{\mr{SAT}}_{\bm{u}}.
%\label{eq:div}
\end{equation*}
As the plots show, only the skew-symmetric discretizations have bounded
energies\footnote{For this problem the discretization of the divergence form
  leads to increasing energy, but this is not always the case. Indeed, when
  solving the constant-coefficient, curvilinear-coordinate problem we did not
  encounter increasing energy.}

\begin{figure}[tbp]
  \subfigure[\SBPGamma family \label{fig:energy_hist_bndry}]{%
    \includegraphics[width=0.9\textwidth]{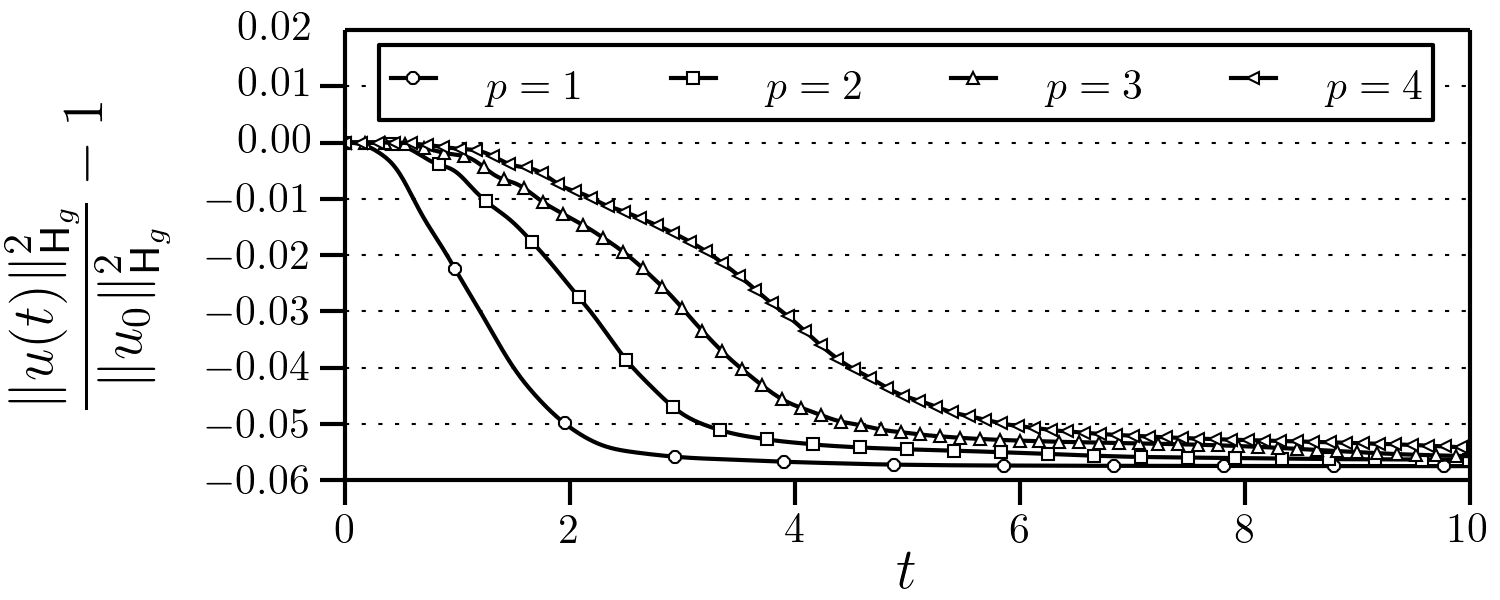}}\\
  \subfigure[\SBPOmega family \label{fig:energy_hist_intr}]{%
    \includegraphics[width=0.9\textwidth]{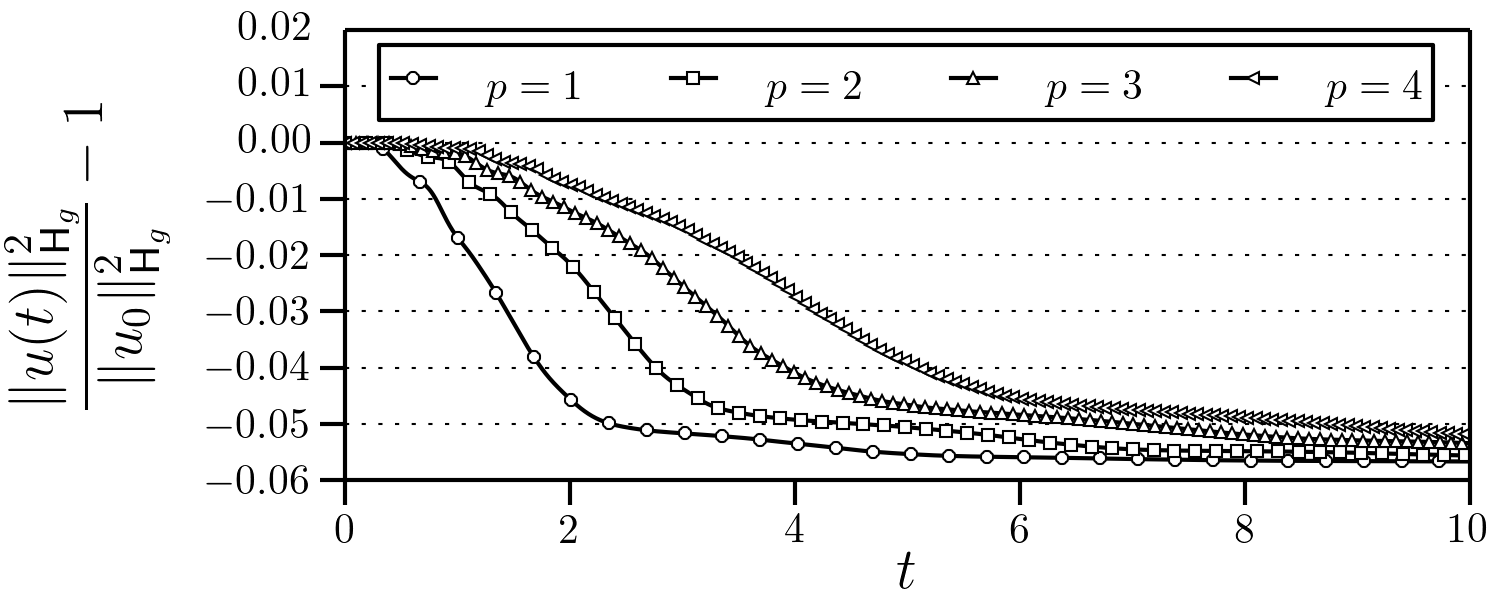}}\
  \subfigure[\SBPGamma with divergence formulation \label{fig:energy_hist_bndryNDG}]{%
    \includegraphics[width=0.9\textwidth]{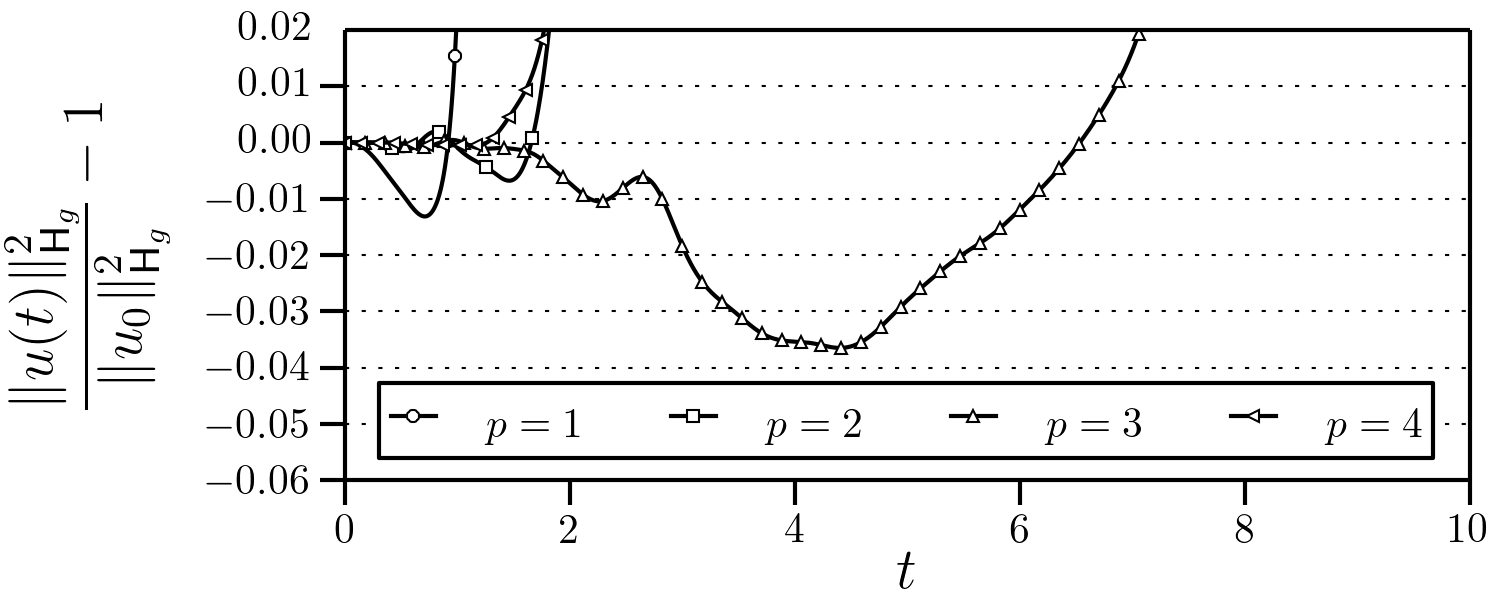}}\\
  \caption{Normalized change in energy versus time.  Every 100th time-step is
    marked with a symbol.\label{fig:energy_hist}}
\end{figure}

%\subsection{Functional accuracy}
% give that the energy error shows superconvergence, I am inclined to skip this.

%=========================================================================
%
% Conclusions
%
%=========================================================================
\section{Conclusions}\label{sec:conclude}

Multi-dimensional SBP operators offer time-stable, high-order, and conservative
discretizations on complex domains, but only if boundary conditions and
inter-element coupling can be imposed in a suitable manner.  To this end, we
have proposed a general framework for the development of SATs that lead to accurate,
stable, and conservative schemes. We focused on developing a set of SATs that are
simple to construct and that allow for the pointwise imposition of boundary
conditions and inter-element coupling. This was accomplished by using
interpolation/extrapolation operators and face-based cubatures to construct the
coupling terms in the SATs. A key insight of this paper is that the $\mat{E}$ matrices and
the coupling terms in the SATs can be decomposed in the same way;
this insight significantly simplifies the development of this class of SATs.

Using these SATs, we showed how to derive conservative and time-stable
discretizations for multi-dimensional SBP operators in the context of the linear
advection equation with a spatially varying velocity field.  In this context,
conservation requires a particular relationship between the
interpolated/extrapolated fluxes and the SATs.  For a divergence-free problem,
we satisfied this conservation condition by projecting the analytical advection
field onto a field that satisfies a discrete form of the divergence-free
equation.  For nonlinear hyperbolic systems of PDEs, numerical flux functions
can be used to satisfy the conservation condition.

The SAT methodology was illustrated using SBP operators on triangular
elements. Two SBP families were considered: the \SBPGamma family with $p+1$
nodes on each face and the \SBPOmega family with strictly interior nodes.

The accuracy, conservation, and stability properties of the SBP-SAT
discretizations were verified using the linear advection equation with
divergence-free velocity fields.  Both the \SBPOmega and \SBPGamma schemes were
shown to be conservative to machine precision, and both produced non-increasing
energy.  For the same operator degree $p$, the \SBPOmega scheme was found to be
more accurate. Finally, we numerically demonstrated that the SBP-SAT
discretizations presented result in superconvergent functional estimates.

%\section*{Acknowledgment}
%The primary author would like thank Lucas Friedrich at the Mathematisches Institut, Universit\"at zu K\"oln, under the supervision of Professor Gregor Gassner, for running a number of
%tests on the general SATs proposed herein, the results of which were critical to the
%formulation of the design-order conditions (\ref{eq:dovar}).
\appendix
%=========================================================================
%
% Discretization
%
%=========================================================================
\normalsize
\section{Satisfaction of the discrete divergence-free equation}\label{sec:Dlambda}

In general, the analytical velocity, which we will denote here as $\hat{\veclambda}_{\xi}$, does not satisfy the
discretized divergence-free condition, \eqref{eq:divfree_h}.  Therefore, we seek
a discrete vector field that satisfies the discrete divergence-free condition
and is as close as possible, in some norm, to the analytical field.  This
appendix describes how find such a discrete vector field.

First we solve for the face-normal velocities, $(\lambda_n)_{i} = \left(\lambda_{\xi} n_{\xi} + \lambda_{\eta} n_{\eta}\right)_{i}$, that appear in
the elements of the $\Blam$ matrices.  A constraint on the $(\lambda_n)_{i}$ for
each element is obtained by substituting $\bm{v} = \bm{1}$ into the
identity~\eqref{eq:divfree_identity}:
\begin{equation*}
  \sum_{j=1}^{\kappa} \left(\bm{1}_{\hat{\Gamma}_{j}}\Tr\B_{\lambda,j} \R_{j}\right)\bm{1} = \sum_{j=1}^{\kappa} \sum_{i=1}^{n_{j}} b_{i}^{(j)} \left(\lambda_{n}^{(j)}\right)_{i}
 = \bm{1}^{T}\left(\Lamx \Qx + \Lamy \Qy\right) \bm{1} = 0,
\end{equation*}
where the last equality follows from $\Qx \bm{1} = \Qy \bm{1} = \bm{0}$.  This
constraint is simply a discretization of $\int_{\Gamhat} \veclambda \cdot
\vecnrm\, \mr{d}\Gamhat = 0$ on each element.  There are fewer elements than
face-normal velocities, so we solve a quadratic optimization problem that
minimizes the Cartesian norm between the discrete and analytical values at the face nodes, $(\lambda_{n}^{(j)})_{i}$ and $(\hat{\lambda}_{n}^{(j)})_{i}$, respectively, subject to the above constraint.

Once the $\Blam$ matrices are determined, we solve for the diagonal matrices
$\Lambda_{\xi}$ and $\Lambda_{\eta}$.  We follow a procedure analogous to the
one used for $\Blam$; in this case we minimize the Cartesian norm between the discrete and analytical values at the SBP nodes and \eqref{eq:divfree_h} becomes the constraint.  The optimization problems on each element are decoupled.

For the cases considered here, we verified that the $L^2$ error in the discrete
velocity field is at least an order of magnitude smaller than the $L^2$ error in
the scalar field $\fnc{U}$.  Moreover, the error in the velocity field decreases
with $h$, the average mesh spacing, at a faster rate than the error in $\bm{u}$,
and the error in the velocity field has an insignificant impact on the solution
error.

\bibliographystyle{aiaa}
\bibliography{references}

\end{document}